\newcommand{\texorpdfstring}[2]{#1}   
\newcommand{\url}[1]{#1} 
\definecolor{gray}{rgb}{0.2,0.2,.2}
\newcommand{\fspace}[1]{{\mathsf{#1}}}
\newcommand{\fspaceL}{\fspace{L}}
\newcommand{\fspaceH}{\fspace{H}}
\newcommand{\eff}{\mathrm{eff}}
\newcommand{\ol}[1]{{\overline{#1}}}
\newcommand{\Rset}{{\mathbb{R}}}
\newcommand{\Zset}{{\mathbb{Z}}}
\newcommand{\oointerval}[2]{(#1,\,#2)}%
\newcommand{\ccinterval}[2]{[#1,\,#2]}%
\newcommand{\DO}[1]{{O\at{#1}}}
\newcommand{\tdots}{{...}}%
\newlength{\mhpicDwidth}
\newlength{\mhpicDvsep}
\newlength{\mhpicDhsep}
\newlength{\mhpicPwidth}
\newlength{\mhpicPvsep}
\newlength{\mhpicPhsep}
\newlength{\mhpicWhsep}
\newcommand{\pair}[2]{{\left({#1},\,{#2}\right)}}
\newcommand{\at}[1]{{\left({#1}\right)}}
\newcommand{\nat}[1]{(#1)}
\newcommand{\bat}[1]{{\big(#1\big)}}
\newcommand{\Bat}[1]{{\Big(#1\Big)}}
\newcommand{\triple}[3]{{\left({#1},\,{#2},\,{#3}\right)}}
\newcommand{\ul}[1]{\underline{#1}}
\newcommand{\D}{\displaystyle}
\newcommand{\bigpar}{\par\quad\newline\noindent}
\newcommand{\abs}[1]{\left|{#1}\right|}
\newcommand{\babs}[1]{\big|{#1}\big|}
\newcommand{\Babs}[1]{\Big|{#1}\Big|}
\newcommand{\dint}[1]{\,\mathrm{d}#1}
\newcommand{\al}{{\alpha}}
\newcommand{\ga}{{\gamma}}
\newcommand{\ka}{{\kappa}}
\newcommand{\la}{{\lambda}}
\newcommand{\si}{{\sigma}}
\newcommand{\om}{{\omega}}
\newcommand{\calC}{\mathcal{C}}
\newcommand{\calD}{\mathcal{D}}
\newcommand{\calE}{\mathcal{E}}
\newcommand{\calK}{\mathcal{K}}
\newcommand{\calP}{\mathcal{P}}
\newcommand{\calV}{\mathcal{V}}
\theoremstyle{plain}
\newtheorem{theorem}             {Theorem}[]
\newtheorem{corollary}  [theorem]{Corollary}
\newtheorem{lemma}      [theorem]{Lemma}
\newtheorem{proposition}[theorem]{Proposition}
\newtheorem*{result*}{Main result}
\newtheorem{assumption} [theorem]{Assumption}
\numberwithin{figure}{section}
\numberwithin{table}{section}
\numberwithin{equation}{section}
\begin{document}
%
%
%
%
\title{Mass transport in Fokker-Planck equations \\ with tilted periodic potential}
\date{\today}
\author{%
Michael Herrmann\thanks{%
Technische Universit\"at Carolo-Wilhelmina zu Braunschweig, {\tt michael.herrmann@tu-braunschweig.de}
} %
\and
Barbara Niethammer\thanks{%
Rheinische Friedrich-Wilhelms-Universit\"at Bonn, 
{\tt niethammer@iam.uni-bonn.de}
} %
} %
\maketitle
%
%
%
%
%
\begin{abstract}
We consider Fokker-Planck equations with tilted periodic potential in the subcritical regime and characterize the spatio-temporal dynamics of the partial masses in the limit of vanishing diffusion. Our convergence proof relies on suitably defined substitute masses and bounds the approximation error using the energy-dissipation relation of the
underlying Wasserstein gradient structure. In the appendix we also discuss the case of an asymmetric double-well potential and derive the corresponding limit dynamics in an elementary way.
\end{abstract}
%
%
%
\quad\newline\noindent%
\begin{minipage}[t]{0.15\textwidth}%
   Keywords: 
\end{minipage}%
\begin{minipage}[t]{0.8\textwidth}%
\emph{Fokker-Planck equations with tilted period potential}, \emph{model reduction for multiscale dynamical systems}, \emph{asymptotic analysis of singular limits}
\end{minipage}%
\medskip
\newline\noindent
\begin{minipage}[t]{0.15\textwidth}%
   MSC (2010): %
\end{minipage}%
35B25,  	
35B40,     
35Q84  	
\begin{minipage}[t]{0.8\textwidth}%
\end{minipage}%

%
\setcounter{tocdepth}{3}
\setcounter{secnumdepth}{3}{\scriptsize{\tableofcontents}}
%
%
%

\section{Introduction}
\label{sect:intro}
%
We study the Fokker-Planck equation
\begin{align}
\label{Eqn:PDE}
\tau\bat{\partial_t\varrho\triple{t}{x}{p}-\Delta_x \varrho\triple{t}{x}{p}}=\nu^2\partial_p^2\varrho\triple{t}{x}{p}+
\partial_p\bat{\bat{H^\prime\at{p}-\si}\varrho\triple{t}{x}{p}}
\end{align}
with small parameters $\tau$ and $\nu$. Here, $t$ and $x\in\Rset^n$ denote the time and 
space variable, respectively, $p\in\Rset$ stands for an internal
but scalar state variable, and the unknown $\varrho$ is supposed to be nonnegative and normalized by
\begin{align}
\label{Eqn:UnitMass}
\int\limits_{\Rset^n}\int\limits_\Rset \varrho\triple{t}{x}{p}\dint{p}\dint{x}=1\,.
\end{align}
Fokker-Planck equations arise in many branches of mathematics and the sciences, see for instance \cite{Ris89} for more background information.  We regard 
\eqref{Eqn:PDE} as a toy model to study some aspects of multi-scale analysis and model reduction for particle systems. Indeed, the PDE  \eqref{Eqn:PDE} -- which is also called  Kramers-Smoluchowski equation -- describes the evolution of the probability density of a particle 
that undergoes random walks under the influence of the potential $H$ and the force term $\si$. In the spatially homogeneous situation -- i.e., without any $x$-dependence -- the stochastic particle dynamics is governed by the over-damped Langevin or Smoluchowski equation
\begin{align}
\label{Eqn:SDE}
\tau \dint{p} = \bat{\sigma-H^\prime\at{p}}\dint{t}+\sqrt{2\nu^2}\dint{W}\,,
\end{align}
where $W$ represents a standard Wiener Process related to Brownian motion in $p$-space.
\par
\begin{figure}[ht!] %
\centering{ %
\includegraphics[width=0.9\textwidth]{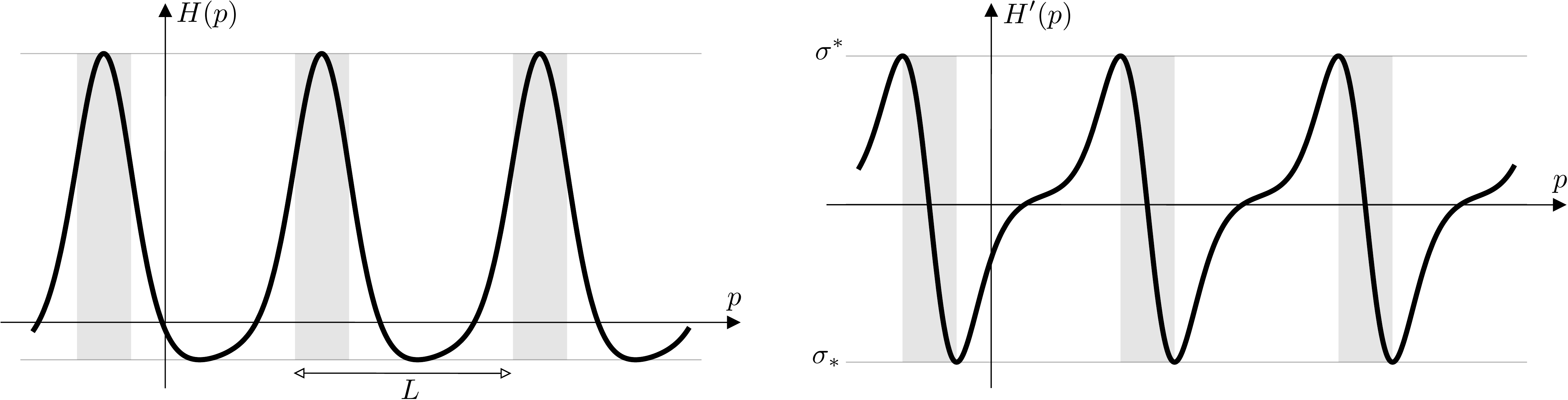}
} %
\caption{\emph{Left panel}. %
Example of an $L$-periodic potential $H$ as in Assumption \ref{Ass:Pot}. The gray boxes indicate the spinodal regions in which $H$ is concave. \emph{Right panel}. The local extrema of $H^\prime$ are denoted by $\si_*$ and $\si^*$. In this paper we always assume that the tilting parameter $\sigma$ is restricted by \eqref{Eqn:DiffRegime} so that the effective potential $H_\eff$ admits equidistant wells as illustrated in the left panel of Figure \ref{Fig:PlotEffPot}.
} %
\label{Fig:PlotPotential}
\end{figure}
In what follows we always suppose that the potential $H$ is a smooth and periodic function in $p$, see
Figure \ref{Fig:PlotPotential} for an illustration, but the particles move in the effective potential
\begin{align}
\label{Eqn:DefEffPot}
H_\eff\at{p}=H\at{p}-\si p
\end{align}
due to the presence of the tilting parameter $\si\in\Rset$, which is assumed to be independent of $\nu$. As depicted in Figure \ref{Fig:PlotEffPot}, the properties of $H_\eff$ strongly depend on the choice of $\si$, where the critical values $\si_*$ and $\si^*$ denote the global minimum and maximum of $H^\prime$ respectively.  In the supercritical regime we have either $\si<\si_*$ or $\si>\si^*$, so $H_\eff$ is either strictly increasing or decreasing. In the subcritical regime $\si_*<\si<\si^*$, however, the effective potential possesses severals wells which represent metastable traps for the stochastic particle dynamics \eqref{Eqn:SDE}. In the present paper we concentrate on the subcritical regime and study the singular limit $\nu\to0$ on the level of the Fokker-Planck equation. In particular, we derive a dynamical limit model which is still infinite-dimensional but simpler and more regular than \eqref{Eqn:PDE} as it does not involve any small parameter.
\par
\begin{figure}[ht!] %
\centering{ %
\includegraphics[width=0.45\textwidth]{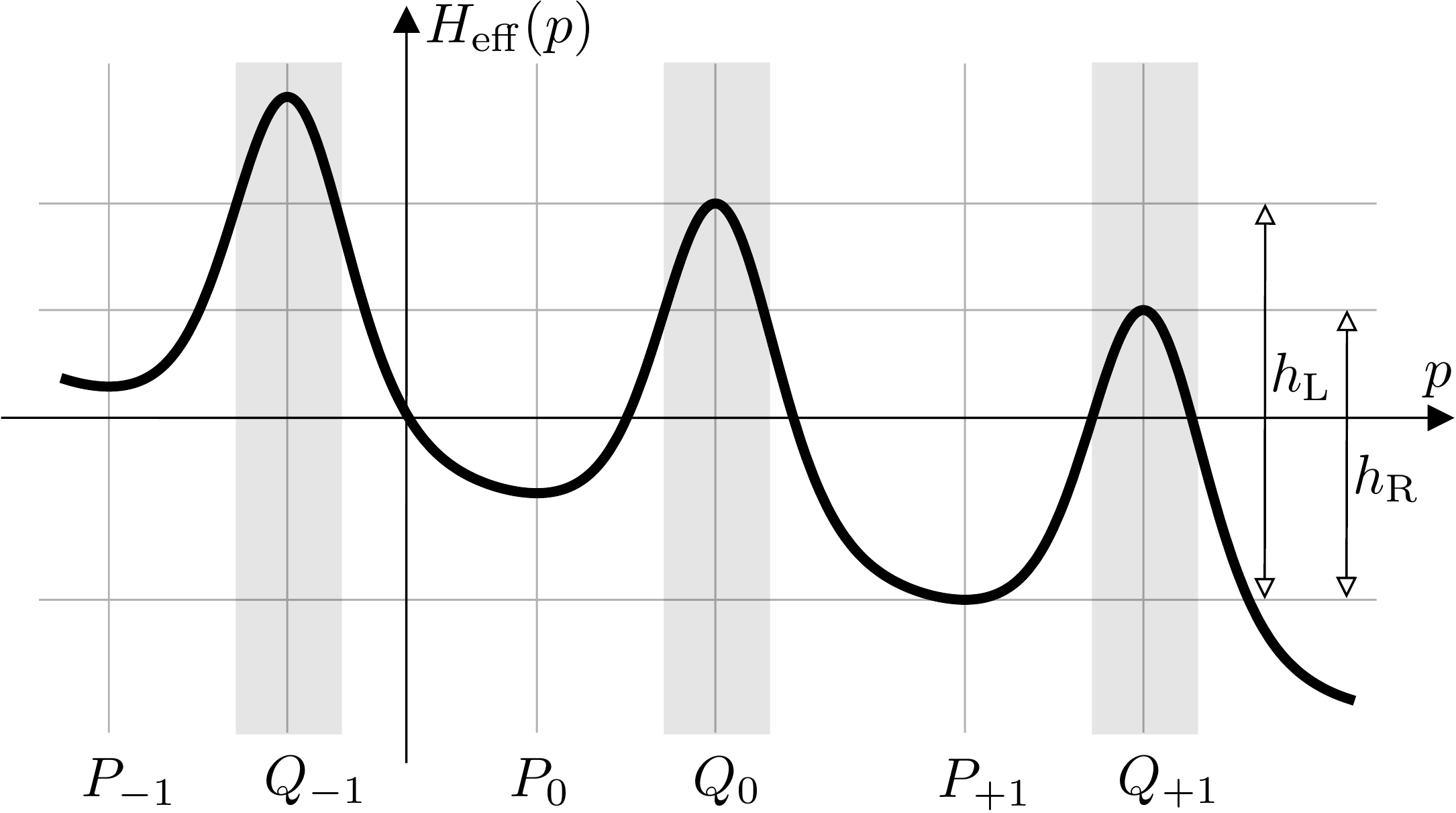} %
\hspace{0.033\textwidth} %
\includegraphics[width=0.45\textwidth]{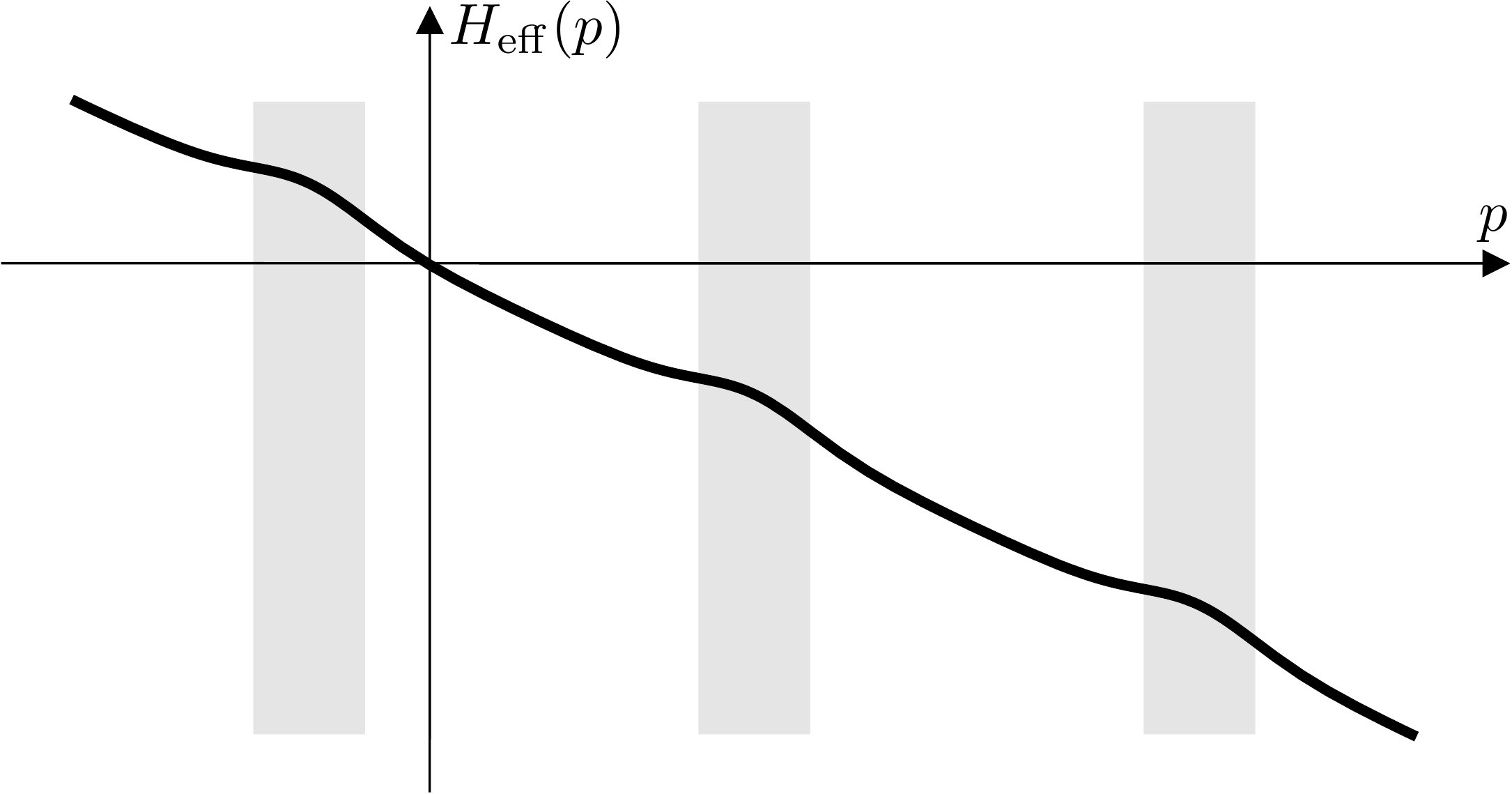}
} %
\caption{\emph{Left panel}. %
In the subcritical regime $\si_*<\si<\si^*$ studied in this paper, the effective potential $H_\eff$ from 
\eqref{Eqn:DefEffPot} admits multiple wells (here depicted for $0<\si<\si^*$) with local minima and maxima located at the positions $P_j$ and $Q_j$, respectively, where $j\in\Zset$ and $P_j<Q_j<P_{j+1}$. The gray boxes indicate the spinodal regions. \emph{Right panel}. Our results do not cover the supercritical regime $\si>\si^*$ since the effective potential has no wells anymore. The mass transfer is therefore very fast, see appendix \ref{sect:appA}.
} %
\label{Fig:PlotEffPot}
\end{figure}
Before we describe our findings and methods in more detail we emphasize that both the supercritical and the subcritical regime of \eqref{Eqn:PDE} have been studied intensively in the physics community but the main focus there is the longtime behavior of the effective velocity and the effective diffusion tensor. 
These quantities are completely determined by the first and the second $p$-moment of $\varrho$ and their averaged grow in time can be computed in many situations, see \cite{LKSG01,ReiEtAl02,SL10} for an overview (including more general models) and  \cite{HP08,LPK13,CY15} for related rigorous result. Our contribution consists in the derivation of a refined model for the limit dynamics that accounts for the mass inside of each well and in the presentation of a particular proof strategy.
%
%
\subsection{Effective mass transport in the subcritical regime}
%
%
Throughout this paper we suppose that the potential $H$ has the following properties.
\begin{assumption}[periodic part of the energy landscape]
\label{Ass:Pot}
The potential $H$ is $L$-periodic and sufficiently smooth such that
\begin{align*}
\si_* :=\min_{p\in\Rset} H^\prime\at{p}\,,\qquad 
\si^*:=\max_{p\in\Rset} H^\prime\at{p}\,,\qquad \zeta:=\sup_{p\in\Rset} \abs{H^{\prime\prime\prime}\at{p}}
\end{align*}
are well-defined. Moreover, $H^\prime$ is unimodal and non-degenerate in the sense that each critical point is a global extreme, i.e., $H^{\prime\prime}\at{p}=0$ implies $H^\prime\at{p}\in\{\si_*,\si^*\}$.
\end{assumption}
A prototypical example of Assumption \ref{Ass:Pot} is
\begin{align*}
H\at{x}=G\bat{\sin\at{x}}\,,
\end{align*}
where $G:\Rset\to\Rset$ is a smooth and strictly increasing function, and a more asymmetric example is depicted in Figure \ref{Fig:PlotPotential}.
\par
As mentioned above, we restrict our considerations to the subcritical regime. This means we fix $\si$ independent of $\nu$ with
\begin{align}
\label{Eqn:DiffRegime}
\si_*<\si<\si^*,
\end{align}
so that the effective potential from \eqref{Eqn:DefEffPot} is tilted to the right and to the left for $\si_*<\si<0$ and $0<\si<\si^*$, respectively. The constraint \eqref{Eqn:DiffRegime} guarantees that
$H_\eff$ admits an infinite number of local minima and maxima, whose positions are denoted by $P_j$ and $Q_j$, respectively. These positions depend on $\si$ but the periodicity of $H$ guarantees that $P_j=P_0+jL$ and  $Q_j=Q_0+jL$ for all $j\in\Zset$, see  Figure \ref{Fig:PlotEffPot} for an illustration.
\par
\begin{figure}[ht!] %
\centering{ %
\includegraphics[width=0.75\textwidth]{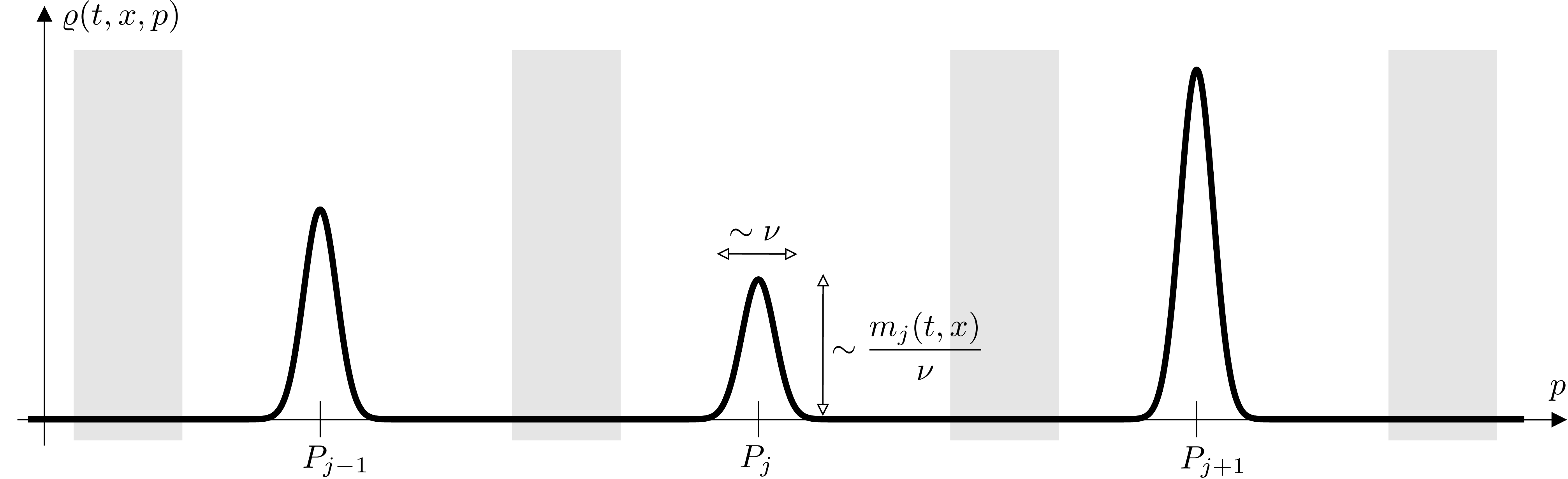}
} %
\caption{ %
Cartoon of the Fokker-Planck solution for small $0<\nu\ll1$: The function $p\mapsto\varrho\triple{t}{x}{p}$ is basically the superposition of infinitely many narrow peaks, where the $j$-th peak is localized at $p=P_j$ and carries mass $m_j\pair{t}{x}$.
These peaks do not move but exchange mass according to the limit dynamics \eqref{MR:Eqn1} or \eqref{MR:Eqn2}.
} %
\label{Fig:MassDistr}
\end{figure}
For any $j\in\Zset$ we define the partial mass
\begin{align}
\label{Eqn:DefMass}
m_j\pair{t}{x}:=\int\limits_{Q_{j-1}}^{Q_j}\varrho\triple{t}{x}{p}\dint{p}\,,
\end{align}
which quantifies at any $\pair{t}{x}$ the amount of mass that is contained in the well around the local minimum $P_j$.  The PDE \eqref{Eqn:PDE} implies that the pointwise total mass
\begin{align}
\label{Eqn:TotMass}
m\pair{t}{x}:=\sum_{j\in\Zset} m_j\pair{t}{x}
\end{align}
diffuses in $x$-space according to 
\begin{align*}
\partial_tm\pair{t}{x}-\Delta_x m\pair{t}{x}=0\,,
\end{align*}
but it remains to understand the spatio-temporal dynamics of $m_j$. This problem is well-understood on the heuristic level and the key arguments for small $\nu$ can be summarized as follows.
Due to the deterministic part in the Brownian motion it is very likely to find particles near one of the local minima. In other words, $\varrho\triple{t}{x}{\cdot}$ consists of infinitely many localized peaks and we can approximate
\begin{align}
\label{Eqn:ApproxRho}
\varrho\triple{t}{x}{p}\approx \sum_{j\in\Zset} m_j\pair{t}{x} \delta_{P_j}\at{p}
\end{align}
at least in weak* sense with Dirac distributions on the right hand side, see Figure \ref{Fig:MassDistr} for a schematic representation. The small diffusion in $p$-direction, however, guarantees 
that each peak has width of order $\DO{\nu}$ and that particles can cross the energy barriers at the local maxima of $H_\eff$ due to random fluctuations. For fixed $x$, this gives rise to a hopping process between the different wells whose characteristic time scales can be computed asymptotically by Kramers celebrated formula from \cite{Kra40}. More precisely, in the limit $\nu\to0$ the expected time for a jump to the next well on the left and on the right is given by
\begin{align*}
\tau c_{\mathrm{K}}^{-1}\exp\at{\frac{h_\mathrm{L}}{\nu^2}}\qquad\text{and}\qquad \tau c_{\mathrm{K}}^{-1}\exp\at{\frac{h_\mathrm{R}}{\nu^2}}
\end{align*}
respectively, and the periodicity of $H$ implies that the energy barriers
\begin{align*}
h_\mathrm{L}:=H_\eff\at{Q_{j-1}}-H_\eff\at{P_j}\,,\qquad h_\mathrm{R}:=H_\eff\at{Q_j}-H_\eff\at{P_j}
\end{align*}
are actually independent of $j$. Moreover, the Kramers constant
\begin{align}
\label{Eqn:Barriers}
c_{\mathrm{K}}:=\frac{\sqrt{\abs{H^{\prime\prime}\at{P_{j}}H^{\prime\prime}\at{Q_{j}}}}}{2\pi}
\end{align}
is also independent of $j$ and is the same for jumps to the left and to the right. This motivates the following choice of the time scale.
\begin{assumption}[choice of $\tau$] 
For fixed $\si$ as in \eqref{Eqn:DiffRegime} we set
\begin{align}
\label{Eqn:Kramers}
\tau:= c_{\mathrm{K}}\exp\at{-\frac{\min\{h_{\mathrm{L}},\,h_{\mathrm{R}}\}}{\nu^2}}\,,
\end{align}
where $\nu>0$ is the small but free parameter.
\end{assumption}
Due to the informal discussion about the characteristic Kramers time scales for the aforementioned hopping process we can formulate the expected limit dynamics depending on whether the value of the tilting parameter $\si$ favors transport to the left or transport to the right.
\begin{result*}[effective mass transport in the subcritical regime] 
In the limit $\nu\to0$, the partial masses evolve according to
\begin{align}
\label{MR:Eqn1}
\partial_t{m}_j\pair{t}{x}-\Delta_x m_j\pair{t}{x} = \left\{\begin{array}{lcll}
m_{j+1}\pair{t}{x}-m_j\pair{t}{x}&&\text{for}&\si_*<\si<0,\\
m_{j-1}\pair{t}{x}-m_j\pair{t}{x}&&\text{for}&0<\si<\si^*,
\end{array}\right.
\end{align}
and 
\begin{align}
\label{MR:Eqn2}
\partial_t{m}_j\pair{t}{x}-\Delta_x m_j\pair{t}{x}  = m_{j-1}\pair{t}{x}+\ka m_{j+1}\pair{t}{x}-\at{1+\ka}m_j\pair{t}{x}\quad \text{for}\quad\si=0
\,,
\end{align}
where the constant $\kappa$ depends only on the properties of $H$ and can be computed explicitly.
\end{result*}
Our goal in this paper is to justify the limit model for the partial masses rigorously in a purely analytical framework with no appeal to probabilistic techniques. It should also  be possible to justify the lattice equations \eqref{MR:Eqn1} and  \eqref{MR:Eqn2} using standard methods from stochastic analysis (such as Large Deviation Principles) but we are not aware of any reference. 
\par
We further mention that the fundamental solution to the linear limit model can be computed explicitly. For instance, assuming $0<\si<\si_*$ and that the entire initial mass is concentrated at $j=j_0$ and $x=x_0$, we readily verify that the corresponding solution to \eqref{MR:Eqn2} is given by
\begin{align}
\label{Eqn:FundSol}
m_{j}\pair{t}{x}=K_{\mathrm{heat}}\pair{t}{x-x_0}\cdot K_{\mathrm{pois}}\pair{t}{j-j_0}\,,
\end{align}
where
\begin{align*}
K_{\mathrm{heat}}\pair{t}{x}=\at{4\pi t}^{-n/2}{\exp\at{-\frac{x^2}{4t}}}\qquad\text{and}\qquad
K_{\mathrm{pois}}\pair{t}{j}=\left\{\begin{array}{ccl}0&&\text{for $j<0$}\\\D\frac{t^j\exp\at{-t}}{j!}&&\text{for $j\geq 0$}\end{array}\right.
\end{align*}
represent the heat kernel and the Poisson point process, respectively. 
%
%
\subsection{Wasserstein gradient structure and proof strategy }
%
%
The PDE \eqref{Eqn:PDE} can be regarded as a Wasserstein gradient flow on the space of probability measures. since it can be written as
\begin{align*}
\tau \partial_t \varrho = \bat{\tau^{1/2}\nu^{-1}\partial_x+\partial_p}\Bat{\varrho\,  \bat{\tau^{1/2}\nu^{-1}\partial_x+\partial_p}\,\partial_\varrho \calE}\,,
\end{align*}
where $\calE$ abbreviates the free energy of the system and $\partial_\varrho$ denotes the functional derivative.  In particular, with
\begin{align}
\label{Eqn:DefEnergy}
\calE\at{t}:=\int\limits_{\Rset^n}\int\limits_\Rset \nu^2\varrho\triple{t}{x}{p}\ln\varrho\triple{t}{x}{p}\dint{p}\dint{x}+\int\limits_{\Rset^n}\int\limits_\Rset \bat{H\at{p}-\si p}\varrho\triple{t}{x}{p}\dint{p}\dint{x}
\end{align}
we readily verify the energy balance
\begin{align}
\label{Eqn:EnergyLaw}
\tau \dot{\calE}\at{t}=-\tau\,\nu^2\, \calC\at{t}-\nu^4\,\calD\at{t}
\end{align}
by direct computations, where 
\begin{align*}
\calC\at{t}:=\int\limits_{\Rset^n}\int\limits_\Rset \frac{\Bat{\nabla_x\varrho\triple{t}{x}{p}
}^2}{\varrho\triple{t}{x}{p}}\dint{p}\dint{x}
\end{align*}
and 
\begin{align}
\label{Eqn:DefDissC}
\calD\at{t}:=\int\limits_{\Rset^n}\int\limits_\Rset
\frac{\Bat{\partial_p\varrho\triple{t}{x}{p}+ \nu^{-2}\bat{H^\prime\at{p}-\si}\varrho\triple{t}{x}{p}}^2}{\varrho\triple{t}{x}{p}}\dint{p}\dint{x}
\end{align}
yield the total dissipations due to the Brownian motion of particles in the $x$- and the $p$-direction, respectively.
\par
The variational interpretation of Fokker-Planck equations like \eqref{Eqn:PDE} has been first described in \cite{JKO97} and attracted a lot of attention during the last decades, especially for Fokker-Planck equations that admit a unique equilibrium corresponding to a global minimizer of the energy. This is, however, not true for tilted periodic potentials because the system can constantly lower its total energy by transporting mass towards $p=-\infty$ (for $\si<0$) or $p=+\infty$ (for $\si>0$), and thus there exists neither a lower bound for the energy nor a steady state for the gradient flow. The
energy-dissipation relation \eqref{Eqn:EnergyLaw} is nevertheless very useful as it provides an temporal $\fspaceL^1$-bounds for the total dissipation on each finite time interval.
\par
The gradient flow perspective has also been used to study the diffusive mass transfers in Fokker-Planck equations with double-well potential, for which the effective dynamics in the limit $\nu\to0$ is a scalar ODE that governs the mass flux though the barrier which separates the two wells. Since our work on tilted periodic potentials has much in common with this problem we discuss the recent literature in appendix \ref{sect:doublewell} and sketch how our method can be applied to the case of a double-well potential. One advantage of our approach is that it covers also asymmetric energy landscapes while most of the recent gradient flow results are restricted to even functions $H$. We also mention that potentials with finitely many wells having the same energy are studied in \cite{MZ17}. This situation shares some similarities with the untitled case $\si=0$ in our paper but the analytic techniques are rather different as they rely on a careful spectral analysis of the Fokker-Planck-operator.
\bigpar
Our approach to the asymptotic justification of the limit dynamics  consists of three main steps, which can informally be described as follows.
\begin{enumerate}
\item \emph{\ul{Effective dynamics of substitute masses:}} 
We first identify two different approximations of the partial masses such that the time derivative of the first substitute mass can be expressed in terms of the second one. In this way we obtain dynamical relations which resemble the lattice equations \eqref{MR:Eqn1} and \eqref{MR:Eqn2} up to certain error terms. The details are presented in \S\ref{sect:SubtituteMasses} and rely on the balance equations of carefully chosen moment integrals of $\varrho$ as well as the asymptotic
auxiliary results and the local equilibrium densities from \S\ref{sect:Prelim}.
\item \emph{\ul{Dissipation bounds approximation error:}} 
Another key argument is that the difference between the partial masses and their substitutes can be controlled by the Wasserstein dissipation. More precisely, we show in \S\ref{sect:ErrorEstimates}
for given $t$ that almost all mass is in fact contained in the vicinity of the local minima $p=P_j$ provided that $\calD\at{t}$ from \eqref{Eqn:DefDissC} is sufficiently small.  Similar mass-dissipation estimates have been used in \cite{HNV14}.
\item \emph{\ul{Energy balance bounds dissipation:}} We finally prove in \S\ref{sect:limit} that \eqref{Eqn:EnergyLaw} implies that $\calD$ is small in an $\fspaceL^1$-sense and hence, loosely speaking, also at most of the times $t$. This results hinges on lower bounds for $\calE\at{t}$ and hence on upper bounds for the modulus of 
\begin{align}
\label{Eqn:DefL}
\calP\at{t}:=\int\limits_{\Rset^n}\int\limits_{\Rset}p
\varrho\triple{t}{x}{p}\dint{p}\dint{x}\,,
\end{align}
but the latter can de deduced from the moment integrals for the substitute masses.
\end{enumerate}
All partial results are combined in the proof of Theorem \ref{Thm:Convergence} and imply a rather elementary justification of the lattice model for the partial masses. Moreover, the authors believe that most of the key arguments can also be  applied to other types of Fokker-Planck equations, see the appendices for first examples. Another, more challenging equation is the nonlocal variant of \eqref{Eqn:PDE}, in which $\si$ is not given a priori but enters as the time dependent Lagrangian multiplier of a dynamical constraint, see \cite{HNV12,HNV14} for a related problem.
%
%
%
%
\section{Asymptotic analysis}
\label{sect:AA}
%
%
To prove our main result from \S\ref{sect:intro} we assume from now on that
\begin{align}
\label{Eqn:DomainSigma}
0\leq\si<\si^*
\end{align}
but emphasize that the case $\si_*<\si\leq0$ can be proven along the same lines. We also denote $C$ any generic constant that is independent of $\nu$ but can depend on the potential $H$ and the choice of $\si$.
%
\subsection{Preliminaries}
\label{sect:Prelim}
%
A key quantity for our asymptotic analysis is the Gibbs function
\begin{align}
\label{Eqn:Gibbs}
\ga\at{p}:=\exp\at{\frac{-H\at{p}+\si p}{\nu^2}}\,,
\end{align}
which is illustrated in Figure \ref{Fig:Gibbs}. Notice that $\ga$ is not integrable and this reflects the lack of nontrivial steady states. This is different to other variants of the Fokker-Planck equation -- as for instance the case of a proper double-well potential as discussed in Appendix \ref{sect:doublewell} -- in which the normalization of $\ga$ defines the unique and globally attracting equilibrium. 
\begin{figure}[ht!]
\centering{
\includegraphics[width=0.95\textwidth]{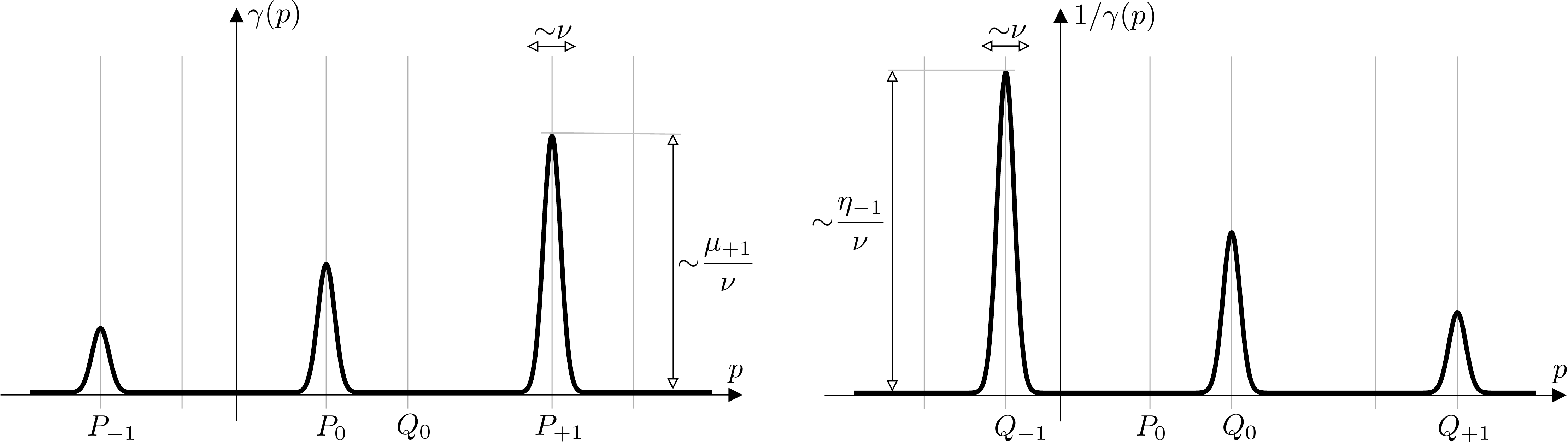} %
} %
\caption{Schematic representation of the Gibbs function $\ga$ from \eqref{Eqn:Gibbs} (\emph{left panel}) and its reciprocal (\emph{right panel}) for $0<\nu\ll1$. Each function can be approximated by a infinite superposition of equidistant peaks with width of order $\nu$, where the mass inside each peak depends exponentially on its position, see Lemma \ref{Lem:AsympIntegrals}.
} %
\label{Fig:Gibbs}
\end{figure}
\par
Our first auxiliary result characterizes the behavior of $\ga$ and $1/\ga$ in the intervals 
\begin{align}
\label{Eqn:Intervals}
J_j := \oointerval{Q_{j-1}}{Q_{j}}\qquad\text{and}\qquad K_j :=\oointerval{P_{j}}{P_{j+1}}
\end{align}
respectively, and provides a rigorous link to the exponential scaling parameter $\tau$ from the Kramers law \eqref{Eqn:Kramers}. The derivation of the latter exploits the well-known Laplace method from the theory of asymptotic integrals, see for instance \cite[section 6.4, esp. equations (6.4.1) and (6.4.35)]{BO99}.
\begin{lemma}[asymptotic integrals]
\label{Lem:AsympIntegrals}
The scalars
\begin{align}
\label{Eqn:Lem:AsympIntegrals.Eqn0}
\mu_j:=\int\limits_{J_j}\ga\at{p}\dint{p}\,,\qquad \eta_j:=\int\limits_{K_j}\frac{1}{\ga\at{p}}\dint{p}
\end{align}
satisfy
\begin{align}
\label{Eqn:Lem:AsympIntegrals.Eqn1a}
\mu_j = \mu_0 \ka^{-j}\,,\qquad 
\eta_j = \eta_{0} \ka^{+j}\,,\qquad 
\kappa := \exp\at{-\frac{\si L}{\nu^2}}\,.
\end{align}
Moreover, we have
\begin{align}
\label{Eqn:Lem:AsympIntegrals.Eqn1b}
\abs{\theta}\leq C\nu^2\,,\qquad  \theta:=\frac{\tau \mu_0\eta_{0}}{\nu^2}-1
\end{align}
for some constant $C$ which depends on $\si$ but not on  $\nu$.
\end{lemma}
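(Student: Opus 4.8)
\emph{Overall plan.} The two assertions are of different nature. The scaling relations $\mu_j=\ka^{-j}\mu_0$ and $\eta_j=\ka^{+j}\eta_0$ are exact identities that follow from the $L$-periodicity of $H$: since $Q_j=Q_0+jL$, the affine substitution $p=q+jL$ maps $J_j$ onto $J_0$, and because $H\at{q+jL}=H\at{q}$ only the linear term in the exponent of $\ga$ is affected, giving $\ga\at{q+jL}=\mhexp{\si jL/\nu^2}\ga\at{q}$; integrating over $J_0$ yields $\mu_j=\mhexp{\si jL/\nu^2}\mu_0=\ka^{-j}\mu_0$, and the same computation with $1/\ga$ on $K_j$ gives $\eta_j=\mhexp{-\si jL/\nu^2}\eta_0=\ka^{+j}\eta_0$. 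This part is routine. The estimate $\abs{\theta}\leq C\nu^2$ is a quantitative Laplace-method statement and is where the work lies.

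\emph{Reduction of the $\theta$-bound.} I would first pull the dominant exponential out of $\mu_0$ and $\eta_0$. Writing $g:=H_\eff-H_\eff\at{P_0}$ and $\wt g:=H_\eff\at{Q_0}-H_\eff$,
\begin{align*}
\mu_0=\mhexp{-H_\eff\at{P_0}/\nu^2}\!\int\limits_{J_0}\mhexp{-g\at{p}/\nu^2}\dint{p}\,,\qquad
\eta_0=\mhexp{H_\eff\at{Q_0}/\nu^2}\!\int\limits_{K_0}\mhexp{-\wt g\at{p}/\nu^2}\dint{p}\,.
\end{align*}
Here two elementary facts enter. First, by Assumption \ref{Ass:Pot} and \eqref{Eqn:DiffRegime}, $P_0$ is the only critical point of $H_\eff$ on $J_0$ and it is a nondegenerate minimum: $H^\prime\at{P_0}=\si$, so $H^{\prime\prime}\at{P_0}=0$ would force $H^\prime\at{P_0}\in\{\si_*,\si^*\}$, contradicting \eqref{Eqn:DiffRegime}; hence $H^{\prime\prime}\at{P_0}>0$ and $g\geq0$ on $J_0$, vanishing only at $P_0$. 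Likewise $Q_0$ is the only critical point of $H_\eff$ on $K_0$, with $-H^{\prime\prime}\at{Q_0}=\abs{H^{\prime\prime}\at{Q_0}}>0$ and $\wt g\geq0$ on $K_0$. Second, $Q_{-1}=Q_0-L$ and periodicity give $H_\eff\at{Q_{-1}}=H_\eff\at{Q_0}+\si L$, so $h_{\mathrm L}=h_{\mathrm R}+\si L\geq h_{\mathrm R}$ because $\si\geq0$ in the regime \eqref{Eqn:DomainSigma}; thus $\min\{h_{\mathrm L},h_{\mathrm R}\}=h_{\mathrm R}=H_\eff\at{Q_0}-H_\eff\at{P_0}$ and $\tau=c_{\mathrm K}\mhexp{-h_{\mathrm R}/\nu^2}$. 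Multiplying the two displays, the exponential prefactors combine to $\mhexp{h_{\mathrm R}/\nu^2}$, which cancels the one in $\tau$, leaving
\begin{align*}
\tau\,\mu_0\,\eta_0=c_{\mathrm K}\Bat{\int\limits_{J_0}\mhexp{-g\at{p}/\nu^2}\dint{p}}\Bat{\int\limits_{K_0}\mhexp{-\wt g\at{p}/\nu^2}\dint{p}}\,.
\end{align*}

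\emph{Laplace asymptotics and the main obstacle.} It then remains to show
\begin{align*}
\int\limits_{J_0}\mhexp{-g\at{p}/\nu^2}\dint{p}=\sqrt{\frac{2\pi\nu^2}{H^{\prime\prime}\at{P_0}}}\bat{1+\DO{\nu^2}}\,,\qquad
\int\limits_{K_0}\mhexp{-\wt g\at{p}/\nu^2}\dint{p}=\sqrt{\frac{2\pi\nu^2}{\abs{H^{\prime\prime}\at{Q_0}}}}\bat{1+\DO{\nu^2}}\,,
\end{align*}
with constants depending only on $H$ and $\si$; substituting and using $c_{\mathrm K}=\sqrt{\abs{H^{\prime\prime}\at{P_0}H^{\prime\prime}\at{Q_0}}}/\at{2\pi}$ from \eqref{Eqn:Barriers}, the $\nu$-independent factors collapse and one obtains $\tau\mu_0\eta_0=\nu^2\bat{1+\DO{\nu^2}}$, i.e.\ $\abs{\theta}\leq C\nu^2$. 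For each Laplace integral I would localize to a fixed neighbourhood of the critical point (the complement contributing $\DO{\mhexp{-c/\nu^2}}$ since $g$, resp.\ $\wt g$, is bounded below there by a positive constant), rescale $p=P_0+\nu s$, Taylor-expand $g$ about $P_0$ using $g\at{P_0}=g^\prime\at{P_0}=0$ and $g^{\prime\prime}\at{P_0}=H^{\prime\prime}\at{P_0}$, and invoke the standard Laplace expansion \cite[Sec.~6.4]{BO99}. The delicate point — the only real obstacle — is sharpening the error to $\DO{\nu^2}$ rather than $\DO{\nu}$: the would-be $\DO{\nu}$ contribution is an odd Gaussian moment and hence vanishes by parity, so the first genuine correction is $\DO{\nu^2}$, provided $H$ is smooth enough for the second-order Laplace expansion to be valid — which is precisely what the cited expansion guarantees. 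The remaining bookkeeping is elementary.
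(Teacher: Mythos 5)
Your proposal is correct and follows essentially the same route as the paper: the exact relations for $\mu_j,\eta_j$ from the $L$-periodicity of $H$, Laplace's method with relative error $\DO{\nu^2}$ for $\mu_0$ and $\eta_0$, and cancellation of the exponential factors against the Kramers scaling \eqref{Eqn:Kramers} using $c_{\mathrm K}$ from \eqref{Eqn:Barriers}. Your additional observations (that $\min\{h_{\mathrm L},h_{\mathrm R}\}=h_{\mathrm R}$ for $\si\geq0$, that nondegeneracy of $P_0,Q_0$ follows from \eqref{Eqn:DiffRegime}, and the parity argument behind the $\DO{\nu^2}$ error) simply make explicit what the paper leaves to the cited Laplace expansion.
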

\begin{proof}
The identities \eqref{Eqn:Lem:AsympIntegrals.Eqn1a} follow -- thanks to the $L$-periodicity of $H$ -- immediately from the definition in \eqref{Eqn:Gibbs} and \eqref{Eqn:Lem:AsympIntegrals.Eqn0}. Moreover, by Laplace's method we verify
\begin{align}
\label{Eqn:Lem:AsympIntegrals.PEqn1}
\mu_0=\frac{\nu\sqrt{2\pi}}{\sqrt{\abs{H^{\prime\prime}\at{P_0}}}}\exp\at{\frac{-H\at{P_0}+\si P_0}{\nu^2}}\bat{1\pm\DO{\nu^2}}
\end{align}
as well as
\begin{align*}
\eta_{0}=\frac{\nu\sqrt{2\pi}}{\sqrt{\abs{H^{\prime\prime}\at{Q_{0}}}}}\exp\at{\frac{+H\at{Q_{0}}-\si Q_{0}}{\nu^2}}\bat{1\pm\DO{\nu^2}}\,,
\end{align*}
where \eqref{Eqn:DomainSigma} ensures $H^{\prime\prime}\at{Q_0}<0<H^{\prime\prime}\at{P_0}$. We thus obtain \eqref{Eqn:Lem:AsympIntegrals.Eqn1b} thanks to the definition of $\tau$ in \eqref{Eqn:Kramers}. 
\end{proof}
Using the Gibbs function \eqref{Eqn:Gibbs} we define local equilibrium measures
\begin{align}
\label{Eqn:DefGaLoc}
\ga_j\at{p}=\mu_j^{-1}\chi_{J_j}\at{p}\ga\at{p}\,,
\end{align}
where $\chi_{J_j}$ denotes the characteristic function of the interval $J_j$. We also introduce a local relative density $w_j^2$ by
\begin{align}
\label{Eqn:DefW}
w_j^2\triple{t}{x}{p}:= \mu_j\frac{\varrho\triple{t}{x}{p}}{\ga\at{p}}\qquad \text{for}\quad p\in J_j\,,
\end{align}
where the second power on the left hand side of \eqref{Eqn:DefW} has been introduced for convenience. In terms of $w$, the partial masses from \eqref{Eqn:DefMass} can be written as
\begin{align}
\label{Eqn:FormMass}
m_j\pair{t}{x}=\int\limits_{J_j}w_j\triple{t}{x}{p}^2\ga_j\at{p}\dint{p}
\end{align}
while the dissipation due to the diffusion in $p$-space reads
\begin{align}
\label{Eqn:FormulaDiss}
\calD\at{t}=4\int\limits_{\Rset^n}D\pair{t}{x}\dint{x}
\end{align}
with
\begin{align}
\label{Eqn:FormulaDiss.2}
D\pair{t}{x}:=\sum_{j\in\Zset} D_j\pair{t}{x}
\,,\qquad   D_j\pair{t}{x}:=\int\limits_{J_j}\bat{\partial_p w_j\triple{t}{x}{p}}^2\ga_j\at{p}\dint{p}\,.
\end{align}
In particular, $m_j$ and $D_j$ are naturally related to the weighted $\fspaceL^2$- and $\fspaceH^1$-norm of $w_j$, where the weight function $\ga_j$ is a normalized and localized variant of $\ga$.
%
%
\subsection{Substitute masses and their dynamics}
\label{sect:SubtituteMasses}
%
%
As already outlined in \S\ref{sect:intro}, our asymptotic analysis is based on suitably defined substitutes to the partial masses $m_j$ from \eqref{Eqn:DefMass}. The first approximation stems from the evaluation of the relative density, i.e we set
\begin{align}
\label{Eqn:SubstMass.1}
\ol{m}_j \pair{t}{x}:= w^2_j\triple{t}{x}{P_j}\,
\end{align}
with $w_j$ as in \eqref{Eqn:DefW}. This definition is motivated by the observation that $\ga_j$ from \eqref{Eqn:DefGaLoc} is strongly localized near $P_j$ for small $\nu$ and that $w_j$ is basically constant for $p\approx P_j$ provided that the partial dissipation $D_j$ from \eqref{Eqn:FormulaDiss.2} is sufficiently small.
\par
The second substitute mass is given by

\begin{align}
\label{Eqn:SubstMass.2}
\widetilde{m}_j\pair{t}{x}:=\int\limits_\Rset \bat{\psi_{j-1}\at{p}-\psi_{j}\at{p}}\varrho\triple{t}{x}{p}\dint{p}\,,
\end{align}
where the weight function $\psi_j$ is uniquely determined by
\begin{align}
\label{Eqn:DefPsi.1}
\psi_j^\prime\at{p}:= \frac{1}{\eta_j\ga\at{p}}\quad \text{for}\quad p\in K_j
\end{align}
and 
\begin{align}
\label{Eqn:DefPsi.2}
\psi_j\at{p}=0\quad \text{for}\quad p<P_{j-1}\,,\qquad
\psi_j\at{p}=1\quad \text{for}\quad p>P_{j}\,.
\end{align}
These definitions imply
\begin{align*}
\sum_{j\in\Zset}\psi_{j-1}\at{p}-\psi_j\at{p}=1
\end{align*}
for all $p\in\Rset$ and hence 
\begin{align}
\label{Eqn:MassIdentity}
\sum_{j\in\Zset}\widetilde{m}_j\pair{t}{x}=\sum_{j\in\Zset}m_j\pair{t}{x}=m\pair{t}{x}
\end{align}
for all $t\geq0$ and any $x\in\Rset^n$.
\par
As illustrated in Figure \ref{Fig:PlotWeights}, the weight function $p\mapsto\bat{\psi_{j-1}\at{p}-\psi_{j}\at{p}}$ approximates for small $\nu>0$ the indicator function of the interval $I_j$ but the main point is that the transition layers near $Q_{j-1}$ and $Q_j$
take a particular form which enables us to compute the time derivative of $\widetilde{m}_j$ up to high accuracy.
\begin{figure}[ht!] %
\centering{ %
\includegraphics[width=0.445\textwidth]{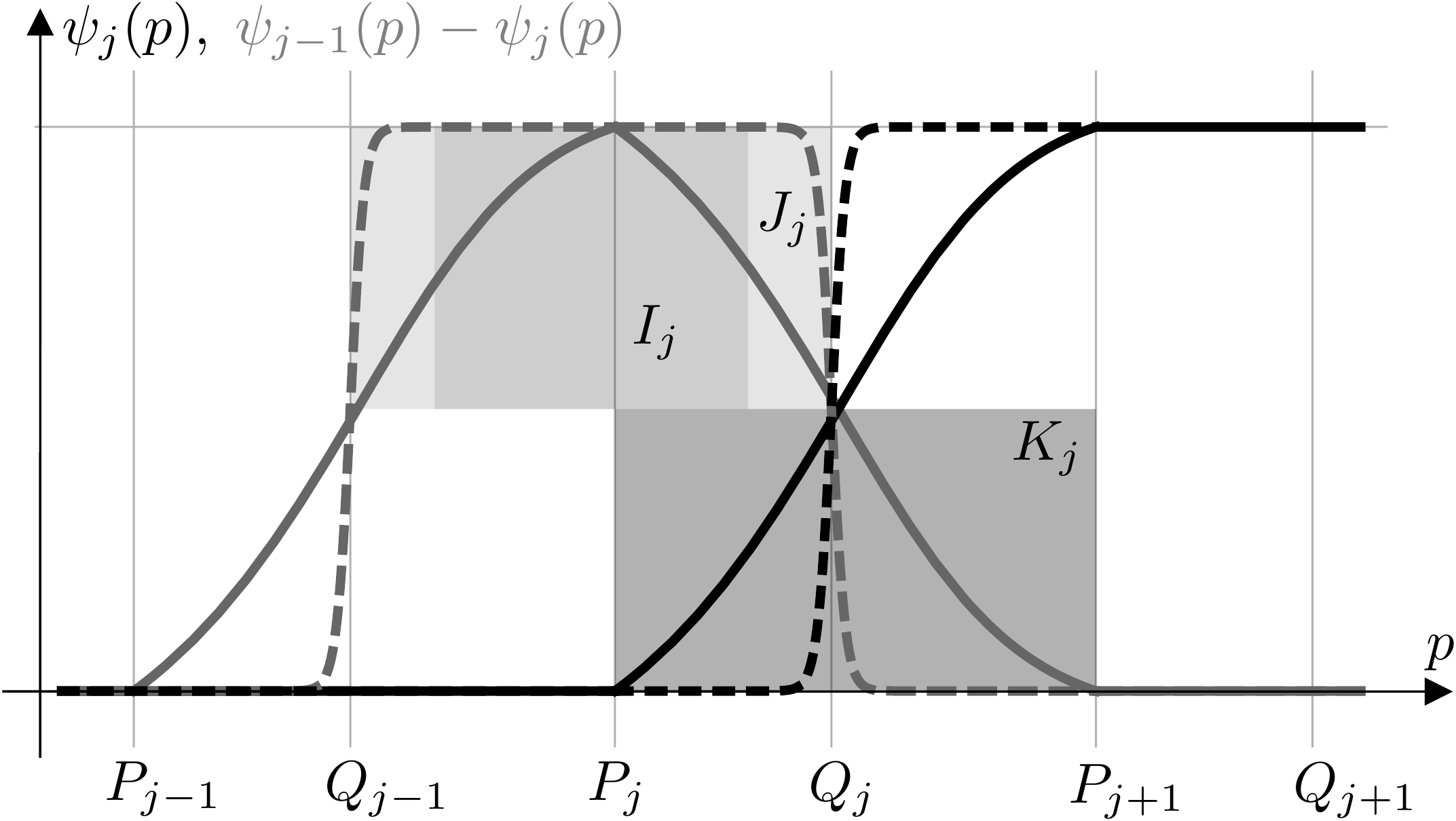} %
\hspace{0.033\textwidth} %
\includegraphics[width=0.445\textwidth]{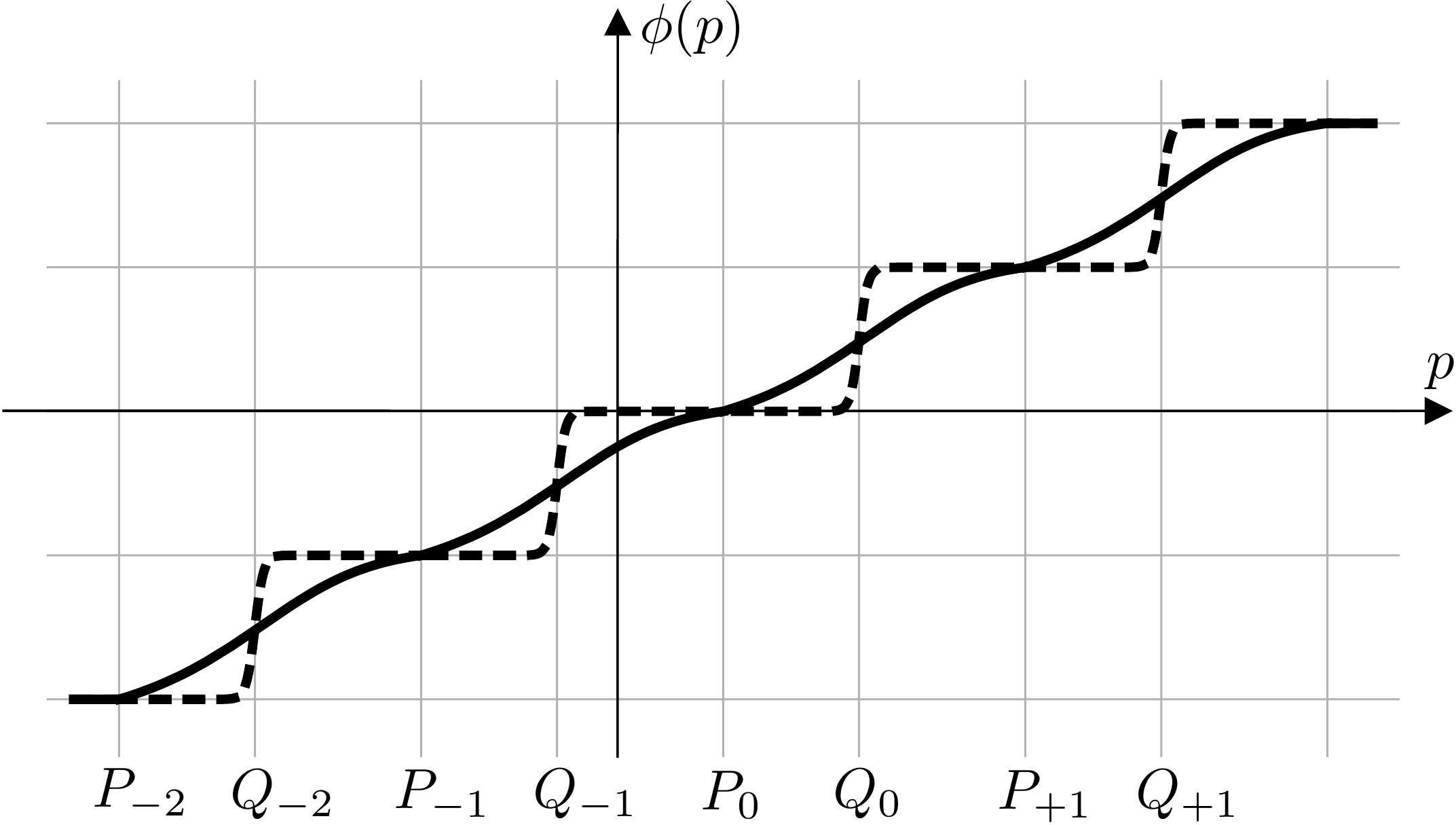} %
} %
\caption{\emph{Left panel.} Piecewise smooth moment weights as used in the definition of the substitute mass $\widetilde{m}_j\at{t}$ in \eqref{Eqn:SubstMass.2} for a small and a moderate value of $\nu$ (dashed and solid lines, respectively). The gray boxes indicate the intervals $I_j$, $J_j$, and $K_j$ from \eqref{Eqn:Intervals} and Lemma \ref{Lem:AuxResult}. \emph{Right panel.} Moment weight $\phi$ for the definition of $\calK$, see \eqref{Eqn:DefPhi} and \eqref{Eqn:DefK}, for two values of $\nu$. The mean slope of $\phi$ is $1/L$.
} %
\label{Fig:PlotWeights} %
\end{figure}
\begin{proposition}[balance of substitute masses]
\label{Prop:SubstituteMasses}
The masses from \eqref{Eqn:SubstMass.1} and \eqref{Eqn:SubstMass.2} satisfy 
\begin{align}
\label{Prop:SubstituteMasses.Eqn1}
\at{1+\theta}\Bat{\partial_t \widetilde{m}_j\pair{t}{x}-\Delta_x\widetilde{m}_j\pair{t}{x}} =\ol{m}_{j-1}\pair{t}{x}-\at{1+\ka}\,\ol{m}_j\pair{t}{x}+\ka\, \ol{m}_{j+1}\pair{t}{x}
\end{align}
where the constants $\ka$ and $\theta$ depend on $\nu$  as is Lemma \ref{Lem:AsympIntegrals}.
\end{proposition}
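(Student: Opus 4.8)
The plan is to differentiate the defining integral \eqref{Eqn:SubstMass.2} in $t$ and apply $\Delta_x$, interchange $\partial_t$ and $\Delta_x$ with the $p$-integral and the (space-independent) weights, substitute the Fokker--Planck equation \eqref{Eqn:PDE}, integrate by parts once in $p$, and then exploit the fact that the weights $\psi_j$ from \eqref{Eqn:DefPsi.1}--\eqref{Eqn:DefPsi.2} were designed precisely so that the remaining $p$-integral telescopes to boundary values of $\varrho/\ga$ at the minima $P_j$. The scaling identities of Lemma \ref{Lem:AsympIntegrals} then convert those boundary values into the asserted combination of the substitute masses $\ol{m}_j$ from \eqref{Eqn:SubstMass.1}.

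Concretely, inserting \eqref{Eqn:PDE} into \eqref{Eqn:SubstMass.2} I would first obtain
\[
\tau\bat{\partial_t\widetilde{m}_j\pair{t}{x} - \Delta_x\widetilde{m}_j\pair{t}{x}}
= \int_\Rset\bat{\psi_{j-1}-\psi_j}\at{p}\Bat{\nu^2\partial_p^2\varrho + \partial_p\bat{(H^\prime(p)-\si)\varrho}}\dint{p}\,.
\]
The function $p\mapsto\bat{\psi_{j-1}-\psi_j}\at{p}$ is Lipschitz and vanishes outside a bounded interval, so a single integration by parts is both admissible and enough --- no second one is required, which matters because $\psi_j$ only has corners, not smoothness, at the points $P_k$ --- and the boundary terms at $p=\pm\infty$ drop out by the decay of $\varrho$ and $\partial_p\varrho$. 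This leaves
\[
\tau\bat{\partial_t\widetilde{m}_j - \Delta_x\widetilde{m}_j}
= -\int_\Rset\bat{\psi_{j-1}^\prime-\psi_j^\prime}\at{p}\Bat{\nu^2\partial_p\varrho + (H^\prime(p)-\si)\varrho}\dint{p}\,.
\]
The decisive point is the identity $\nu^2\partial_p\varrho + (H^\prime(p)-\si)\varrho = \nu^2\ga\,\partial_p\!\bat{\varrho/\ga}$, which follows at once from $\partial_p\ga = \nu^{-2}(\si-H^\prime(p))\ga$ by \eqref{Eqn:Gibbs}. Since \eqref{Eqn:DefPsi.1} gives $\psi_k^\prime = 1/(\eta_k\ga)$ on $K_k$ and $\psi_k^\prime\equiv0$ elsewhere, the product $\psi_k^\prime\,\nu^2\ga$ equals the \emph{constant} $\nu^2/\eta_k$ on $K_k$, so
\[
\int_\Rset\psi_k^\prime\,\nu^2\ga\,\partial_p\!\bat{\varrho/\ga}\dint{p}
= \frac{\nu^2}{\eta_k}\int\limits_{K_k}\partial_p\!\bat{\varrho/\ga}\dint{p}
= \frac{\nu^2}{\eta_k}\Bat{\frac{\varrho\triple{t}{x}{P_{k+1}}}{\ga\at{P_{k+1}}} - \frac{\varrho\triple{t}{x}{P_{k}}}{\ga\at{P_{k}}}}\,.
\]
Because $P_k$ lies in $J_k$, the definitions \eqref{Eqn:DefW} and \eqref{Eqn:SubstMass.1} give $\varrho\triple{t}{x}{P_k}/\ga\at{P_k} = \ol{m}_k\pair{t}{x}/\mu_k$; applying the last display with $k=j-1$ and $k=j$ then expresses $\tau(\partial_t\widetilde{m}_j - \Delta_x\widetilde{m}_j)$ as a linear combination of $\ol{m}_{j-1},\ol{m}_j,\ol{m}_{j+1}$ with coefficients of the form $\nu^2/(\eta_k\mu_\ell)$.

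It remains to evaluate the four coefficients using Lemma \ref{Lem:AsympIntegrals}. The relations $\mu_j=\mu_0\ka^{-j}$ and $\eta_j=\eta_0\ka^{j}$ give $\eta_{j-1}\mu_{j-1}=\eta_j\mu_j=\mu_0\eta_0$ and $\eta_{j-1}\mu_j=\eta_j\mu_{j+1}=\mu_0\eta_0\ka^{-1}$, and then $\tau\mu_0\eta_0=\nu^2(1+\theta)$ turns $\nu^2/(\eta_{j-1}\mu_{j-1})$ and $\nu^2/(\eta_j\mu_j)$ into $\tau/(1+\theta)$, while $\nu^2/(\eta_{j-1}\mu_j)$ and $\nu^2/(\eta_j\mu_{j+1})$ become $\tau\ka/(1+\theta)$. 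Dividing by $\tau$, multiplying by $1+\theta$, and collecting the $\ol{m}$-terms produces precisely $\ol{m}_{j-1}-(1+\ka)\ol{m}_j+\ka\ol{m}_{j+1}$, which is \eqref{Prop:SubstituteMasses.Eqn1}.

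I do not anticipate a genuine obstacle; the proposition is a direct, if bookkeeping-heavy, computation. The two places that call for a little care are: (i) making sure the regularity and $p$-decay of $\varrho$ (from parabolic theory for \eqref{Eqn:PDE}) genuinely justify the single integration by parts and the use of the a.e.-defined derivative $\psi_k^\prime$ across the corners of $\psi_k$; and (ii) keeping the indices straight, so that the value of $\varrho/\ga$ at the shared endpoint $P_j$ of $K_{j-1}$ and $K_j$ is correctly combined into the coefficient of $\ol{m}_j$.
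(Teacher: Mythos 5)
Your proposal is correct and is essentially the paper's own argument: where the paper encodes the computation through the singular ODE $\nu^2\psi_j^{\prime\prime}-\bat{H^\prime-\si}\psi_j^\prime=\al_{-,\,j}\delta_{P_j}-\al_{+,\,j}\delta_{P_{j+1}}$ and integrates it against $\varrho$, you integrate by parts once and use $\nu^2\partial_p\varrho+\bat{H^\prime-\si}\varrho=\nu^2\ga\,\partial_p\at{\varrho/\ga}$ together with the constancy of $\psi_k^\prime\ga$ on $K_k$, which is the same reduction to point values of $\varrho/\ga$ at the minima $P_k$. The subsequent conversion to $\ol{m}_{j-1}-\at{1+\ka}\ol{m}_j+\ka\,\ol{m}_{j+1}$ via Lemma \ref{Lem:AsympIntegrals} and the definition of $\theta$ is identical to the paper's.
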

\begin{proof} 
By construction  -- see \eqref{Eqn:Lem:AsympIntegrals.Eqn0}, \eqref{Eqn:DefPsi.1}, and \eqref{Eqn:DefPsi.2} -- the function $\psi_j$ is continuous, piecewise smooth and satisfies on $\Rset$ the singular ODE
\begin{align*}
\nu^2\psi_j^{\prime\prime}\at{p}-\bat{H^\prime\at{p}-\si}\psi_k^\prime\at{p}=
\al_{-,\,j}\,\delta_{P_{j}}\at{p}
-\al_{+,\,j}\,\delta_{P_{j+1}}\at{p}
\end{align*}
with Dirac weights 
\begin{align*}
\al_{-,\,j}:=\nu^2\psi_j^\prime\at{P_{j}{+}0}=\frac{\nu^2}{\eta_j\ga\at{P_{j}}}\,,\qquad
\al_{+,\,j}:=\nu^2\psi_j^\prime\at{P_{j+1}{-}0}=\frac{\nu^2}{\eta_j\ga\at{P_{j+1}}}\,.
\end{align*}
Using the PDE \eqref{Eqn:PDE} and integration by parts with respect to $p$ we thus verify 
\begin{align}
\label{Prop:SubstituteMasses.PEqn1}
\begin{split}
\tau\bat{\partial_t -\Delta_x}\int_{\Rset}\psi_j\at{p}\varrho\triple{t}{x}{p}\dint{p} &=\al_{-,\,j}\varrho\triple{t}{x}{P_{j}}-\al_{+,\,j}\varrho\triple{t}{x}{P_{j+1}}
\\&=\frac{\nu^2}{\mu_j\eta_j}\ol{m}_j\pair{t}{x}-\frac{\nu^2}{\mu_{j+1}\eta_j}\ol{m}_{j+1}\pair{t}{x}
\\&=
\frac{\nu^2}{ \mu_0\eta_{0}}\bat{ \,\ol{m}_{j}\pair{t}{x}-\ka\, \ol{m}_{j+1}\pair{t}{x}}
\end{split}
\end{align}
thanks to \eqref{Eqn:Lem:AsympIntegrals.Eqn1a}, \eqref{Eqn:DefGaLoc}, \eqref{Eqn:DefW}, and \eqref{Eqn:SubstMass.1}. The claim  thus follows thanks to \eqref{Eqn:SubstMass.2} and the definition of $\theta$ in \eqref{Eqn:Lem:AsympIntegrals.Eqn1b}.
\end{proof}
Lemma \ref{Prop:SubstituteMasses.Eqn1} is at the very heart of asymptotic analysis as it provides
a dynamic relation between the different substitute masses which does not involve the small parameter $\tau$ in front of the time derivative. In particular, \eqref{Prop:SubstituteMasses.Eqn1} implies the validity of the limit model from \S\ref{sect:intro} provided that we can control the approximation errors $m_j-\bar{m}_j$ and $m_j-\widetilde{m}_j$, and this will be done below using the Wasserstein gradient structure. 
\par
A particular challenge in this context is that the energy $\calE$ is not bounded below
but decreases in $t$ since there is an effective mass transport due to the tilting of the potential. In order to estimate the decrease of $\calE$ one has to control the growth of $\calP$, but the PDE \eqref{Eqn:PDE} does not give rise to uniform bounds for $\tfrac{\dint}{\dint t}{\calP}$. To overcome this difficulty we introduce the moment 
\begin{align}
\label{Eqn:DefK}
\calK\at{t}:=\int\limits_{\Rset^n}\int\limits_{\Rset} \phi\at{p}\varrho\triple{t}{x}{p}\dint{p}\dint{x}
\end{align}
whose weight function is uniquely defined by
\begin{align}
\label{Eqn:DefPhi}
\phi^\prime\at{p}:=\sum_{j\in\Zset}\psi^\prime_j\at{p}\,,\qquad \phi\at{P_0}:=0
\end{align}
and illustrated in the right panel of  Figure \ref{Fig:PlotWeights}.
\begin{lemma}[evolution of $\calK$]
\label{Lem:EvolutionK}
We have
\begin{align*}
\at{1+\theta}\tfrac{\dint}{\dint t}\calK\at{t}=\at{1-\ka}\sum_{j\in\Zset}\,\int\limits_{\Rset^n}\ol{m}_j\pair{t}{x}\dint{x}
\end{align*}
as well as
\begin{align*}
\babs{P_0+L\,\calK\at{t}-\calP\at{t}}\leq C
\end{align*}
for some constant $C$  which does not dependent on $t$ or $\nu$.
\end{lemma}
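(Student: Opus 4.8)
The statement has two parts, and both should follow from combining Proposition~\ref{Prop:SubstituteMasses} with elementary bookkeeping about the weight functions $\psi_j$ and $\phi$. For the evolution of $\calK$, the plan is to test the PDE \eqref{Eqn:PDE} against $\phi$ exactly as was done for $\psi_j$ in the proof of Proposition~\ref{Prop:SubstituteMasses}. Since $\phi^\prime=\sum_{j}\psi_j^\prime$, the function $\phi$ satisfies on $\Rset$ the singular ODE $\nu^2\phi^{\prime\prime}-\bat{H^\prime-\si}\phi^\prime=\sum_{j}\bat{\al_{-,\,j}\,\delta_{P_j}-\al_{+,\,j}\,\delta_{P_{j+1}}}$, i.e.\ a telescoping sum of Dirac masses. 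After integrating by parts in $p$ and integrating in $x$ (the Laplacian term drops out because $\calK$ integrates $\varrho$ over all of $\Rset^n$ and $\varrho$ decays; alternatively one works with $\int_{\Rset^n}(\partial_t-\Delta_x)\dots\dint x=\tfrac{\dint}{\dint t}\calK$), one gets, using $\al_{-,\,j}=\nu^2/(\eta_j\ga(P_j))$, $\al_{+,\,j}=\nu^2/(\eta_j\ga(P_{j+1}))$, the identities \eqref{Eqn:Lem:AsympIntegrals.Eqn1a}, and the definitions \eqref{Eqn:DefW}, \eqref{Eqn:SubstMass.1}, that each boundary term at $P_j$ contributes $\tfrac{\nu^2}{\mu_0\eta_0}\bat{\ol m_j-\ka\,\ol m_j}$ after summation — the shift by one index in the telescoping turns the $-\ka\,\ol m_{j+1}$ term from \eqref{Prop:SubstituteMasses.PEqn1} into $-\ka\,\ol m_j$ once summed over $j$. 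Multiplying by $\tau$ and invoking $\tau\mu_0\eta_0/\nu^2=1+\theta$ gives precisely $(1+\theta)\tfrac{\dint}{\dint t}\calK=(1-\ka)\sum_j\int_{\Rset^n}\ol m_j\dint x$.

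For the second assertion, the point is that $\phi(p)-p/L$ is bounded on $\Rset$. Indeed, from \eqref{Eqn:DefPsi.1} one has $\psi_j^\prime(p)=1/(\eta_j\ga(p))$ on $K_j$, and Lemma~\ref{Lem:AsympIntegrals} (specifically \eqref{Eqn:Lem:AsympIntegrals.PEqn1} and its analogue for $\eta_j$, together with $\eta_j=\eta_0\ka^j$) shows that $\int_{K_j}\psi_j^\prime(p)\dint p=1$ for every $j$ while the measure $\psi_j^\prime\,\dint p$ is concentrated near $Q_j$ within a window of width $\DO\nu$. Hence $\phi$ increases by exactly $1$ across each period interval $K_j$ of length $L$, so $\phi(p)=p/L+b(p)$ with $b$ periodic-up-to-bounded; more precisely $\abs{\phi(p)-\phi(P_0)-(p-P_0)/L}\leq C$ uniformly in $p$ and $\nu$, because on each $K_j$ the antiderivative of $\psi_j^\prime$ stays within $[0,1]$ and the linear comparison function $(p-P_j)/L$ also stays within $[0,1]$. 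Multiplying by $L$ and integrating against $\varrho$ (whose total mass is $1$ by \eqref{Eqn:UnitMass}), the bound $\abs{L\,\phi(p)-P_0-(p-P_0)}\le CL$ integrates to $\babs{P_0+L\,\calK(t)-\calP(t)}\le C$, with $C$ independent of $t$ and $\nu$.

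The main obstacle is the uniformity in $\nu$ of the bound $\abs{L\phi(p)-p+\mathrm{const}}\le C$: although $\phi$ always gains exactly $1$ per period, the detailed profile of $\psi_j^\prime$ degenerates as $\nu\to0$ (it becomes an increasingly sharp spike near $Q_j$), so one must argue that the \emph{cumulative} integral $\int_{P_j}^p\psi_j^\prime$ — as opposed to the density — remains trapped between $0$ and $1$ on $K_j$ regardless of how sharp the spike is. This is true simply because $\psi_j^\prime\ge0$ and $\int_{K_j}\psi_j^\prime=1$, so it is really just monotonicity; the only care needed is to handle the endpoints and to check that the $\DO\nu$-corrections to $\int_{K_j}\psi_j^\prime=1$ coming from Laplace's method do not accumulate over the infinitely many $j$ — but they do not, because \eqref{Eqn:DefPsi.1}–\eqref{Eqn:DefPsi.2} \emph{define} $\psi_j$ so that $\psi_j(P_j)=1$ and $\psi_j(P_{j-1})=0$ exactly, making $\int_{K_j}\psi_j^\prime=1$ exact rather than asymptotic. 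With that observation the estimate is clean.
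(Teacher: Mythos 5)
Your proposal is correct and follows essentially the same route as the paper: the evolution identity is obtained by testing \eqref{Eqn:PDE} against $\phi$, which amounts to summing \eqref{Prop:SubstituteMasses.PEqn1} over $j$ and integrating in $x$, and the second claim comes from the telescoping representation of $\phi$ in terms of the $\psi_j$ and the resulting bound $\sup_p\babs{P_0+L\phi\at{p}-p}\leq C$. Your explicit monotonicity argument for the uniformity in $\nu$ of this bound (each $\psi_j$ increases exactly from $0$ to $1$ across one period, so the cumulative integral is trapped in $\ccinterval{0}{1}$ regardless of how sharp the spike of $\psi_j^\prime$ becomes) is precisely the detail the paper leaves implicit.
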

\begin{proof} The definitions \eqref{Eqn:DefPsi.1},  \eqref{Eqn:DefPsi.1}, and \eqref{Eqn:DefPhi} yield 
\begin{align}
\label{Eqn:PropsPhi}
\phi\at{p}=\sum_{j=0}^{+\infty} \psi_j\at{p}-\sum_{j=-1}^{-\infty}\bat{1-\psi_j\at{p}}\,,
\end{align}
where the right hand side is actually a finite sum for any given $p\in\Rset$. In particular, we have
\begin{align}
\label{Eqn:BoundsPhi}
\sup_{p\in\Rset}\babs{P_0+L \phi\at{p}-p}<\infty\,,
\end{align}
and this implies the second claim. The first one follows from \eqref{Prop:SubstituteMasses.PEqn1} and \eqref{Eqn:PropsPhi} after summation over $j$ and integration with respect to $x$.
\end{proof}
%
\subsection{Asymptotic error estimates}
\label{sect:ErrorEstimates}
%
In this section we establish the key asymptotic estimates concerning the approximation of $m_j$ from \eqref{Eqn:DefMass} by the substitute masses $\ol{m}_j$ and $\widetilde{m}_j$ from \eqref{Eqn:SubstMass.1} and \eqref{Eqn:SubstMass.2}, respectively.
\begin{figure}[ht!] %
\centering{ %
\includegraphics[width=0.45\textwidth]{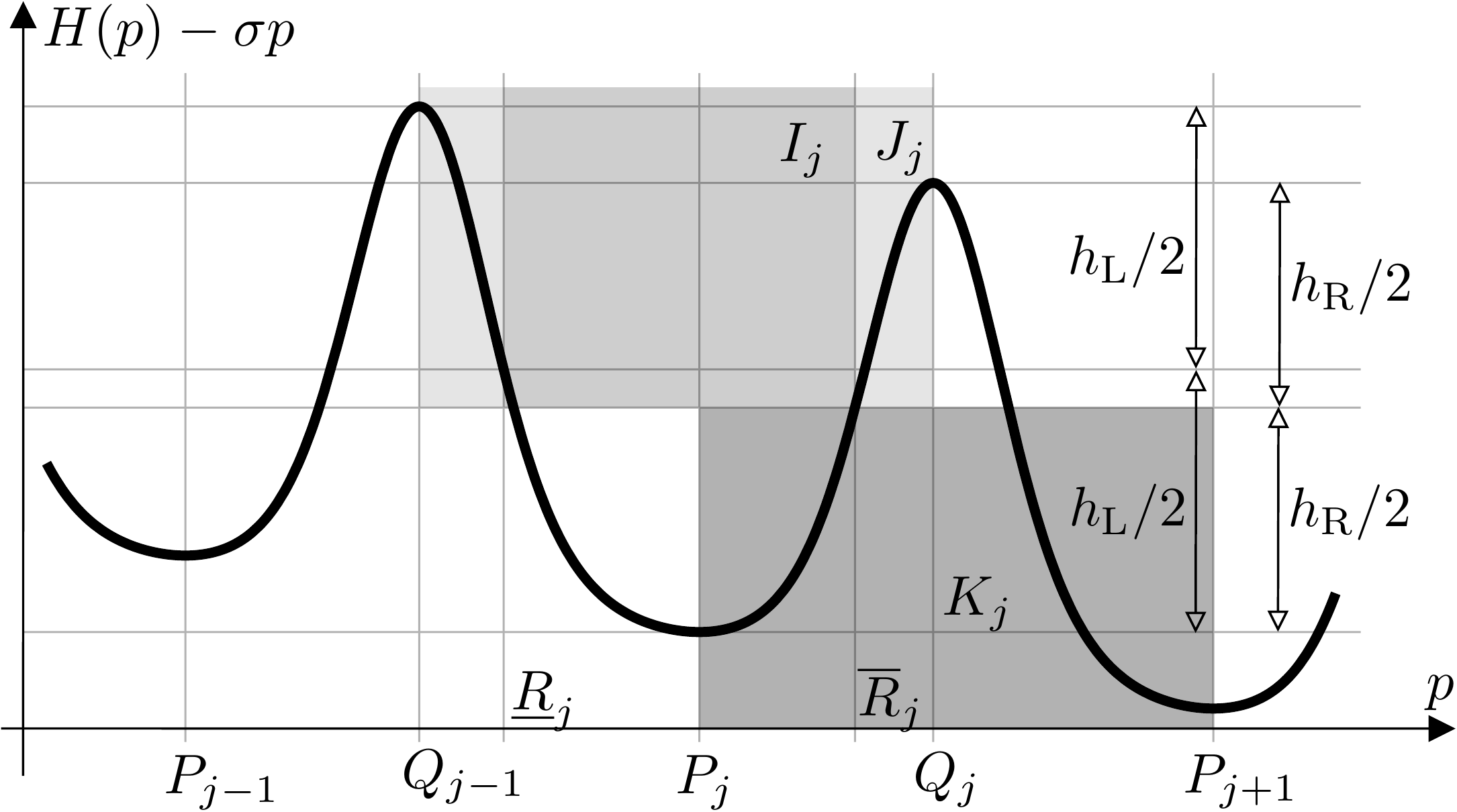} %
} %
\caption{ %
Positions $\ul{R}_j$ and $\ol{R}_j$ as used in the proof of Lemma \ref{Lem:AuxResult}, where 
$h_{\mathrm{L}}$ and $h_{\mathrm{R}}$ are the Kramers barriers from \eqref{Eqn:Barriers} and \eqref{Eqn:Kramers}, and satisfies $h_{\mathrm{R}}\leq h_{\mathrm{L}}$ thanks to $\si\geq0$.
} %
\label{Fig:AuxResult} %
\end{figure} %
\begin{lemma}[asymptotic auxiliary result]
\label{Lem:AuxResult} 
For any $j$ there exists an interval $I_j \subset J_j$ such that
\begin{align}
\notag
\int\limits_{J_j\setminus I_j}\ga_j\at{p}\leq C\nu\sqrt{\tau}\,,\qquad
\sup_{p\in I_j}\babs{\psi_{j}\at{p}}+\babs{1-\psi_{j-1}\at{p}}\leq C\nu\sqrt{\tau}
\end{align}
for some constant $C$ which depends on $\si$ but not on $\nu$.
\end{lemma}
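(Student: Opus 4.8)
The plan is to construct the interval $I_j$ explicitly by cutting out small neighborhoods of the two energy barriers $Q_{j-1}$ and $Q_j$ that bound $J_j$, and then to verify the two estimates separately using Laplace-type asymptotics for $\ga_j$ and the integral representation of $\psi_j$. By periodicity it suffices to treat $j=0$ (or any fixed $j$) and then translate, so I drop the subscript where convenient. Since $\si\geq0$ we have $h_{\mathrm R}\leq h_{\mathrm L}$, hence $\tau=c_{\mathrm K}\exp\at{-h_{\mathrm R}/\nu^2}$ by \eqref{Eqn:Kramers}, and this is the scale that must appear in the right-hand sides.

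First I would fix a small but $\nu$-independent $\delta>0$ and set $I_j:=\ccinterval{\ol R_{j-1}}{\ul R_j}$, where $\ul R_j$ and $\ol R_j$ are points slightly to the left and right of $P_j$ chosen so that $H_\eff$ has risen from its value at $P_j$ by a fixed fraction of the smaller barrier height; concretely, pick $\ul R_j<P_j<\ol R_j$ in the well $J_j$ with $H_\eff\at{\ul R_j}=H_\eff\at{P_j}+(1-\delta)h_{\mathrm L}$ and $H_\eff\at{\ol R_j}=H_\eff\at{P_j}+(1-\delta)h_{\mathrm R}$, as in Figure \ref{Fig:AuxResult}. For the mass estimate, I use that on $J_j\setminus I_j$ the density $\ga_j=\mu_j^{-1}\ga$ is controlled by its value at the endpoints: $\int_{J_j\setminus I_j}\ga_j\at{p}\dint p\leq C\mu_j^{-1}\exp\bat{(-H_\eff\at{P_j}-(1-\delta)h_{\mathrm R})/\nu^2}\cdot\nu$ after a standard Laplace estimate near $Q_{j-1}$ and $Q_j$, while $\mu_j\geq c\,\nu\exp\bat{-H_\eff\at{P_j}/\nu^2}$ by the lower Laplace bound underlying \eqref{Eqn:Lem:AsympIntegrals.PEqn1}. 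Dividing, the $\nu$ factors cancel and one is left with $C\exp\bat{-(1-\delta)h_{\mathrm R}/\nu^2}$, which for $\delta$ small is $\leq C\nu\sqrt\tau=C\nu\sqrt{c_{\mathrm K}}\exp\bat{-h_{\mathrm R}/(2\nu^2)}$ once $\nu$ is small, since $(1-\delta)>1/2$. The same bookkeeping handles both pieces of $J_j\setminus I_j$; the barrier at $Q_j$ (height $h_{\mathrm R}$) is the binding one, which is why $h_{\mathrm R}$ and not $h_{\mathrm L}$ governs the rate.

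For the second estimate, recall from \eqref{Eqn:DefPsi.1}, \eqref{Eqn:DefPsi.2} that $\psi_j\at{p}=\eta_j^{-1}\int_{P_j}^{p}\ga\at{q}^{-1}\dint q$ for $p\in K_j$, and $\psi_j$ is constant ($0$ or $1$) outside $K_{j}\cup K_{j-1}$ -- wait, more precisely $\psi_j$ interpolates from $0$ to $1$ across $K_{j-1}$ and stays at $1$ on $K_j$; so on $I_j=\ccinterval{\ol R_{j-1}}{\ul R_j}$ the relevant quantity $\psi_j\at{p}$ is $\eta_j^{-1}\int_{P_{j-1}}^{p}\ga^{-1}$ for $p\leq P_j$ and then identically $1$ -- hmm, let me just say: on $I_j$, $\psi_j$ is supported-away-from-$1$ only on $\ccinterval{\ol R_{j-1}}{P_j}\subset K_{j-1}$, where $\psi_j\at{p}=\eta_{j-1}^{-1}\int_{P_{j-1}}^{p}\ga^{-1}\dint q\leq \eta_{j-1}^{-1}\int_{P_{j-1}}^{\ol R_{j-1}}\ga^{-1}\dint q$. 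Since $1/\ga=\exp\bat{(H_\eff)/\nu^2}$ has its peak in $K_{j-1}$ at $Q_{j-1}$ with height $\exp\bat{H_\eff\at{Q_{j-1}}/\nu^2}$, and we have truncated at $\ol R_{j-1}$ where $H_\eff$ is still $\delta h_{\mathrm R}$ below that peak, the truncated integral is $\leq C\nu\exp\bat{(H_\eff\at{Q_{j-1}}-\delta h_{\mathrm R})/\nu^2}$ while $\eta_{j-1}\geq c\nu\exp\bat{H_\eff\at{Q_{j-1}}/\nu^2}$ by the full Laplace asymptotics; the ratio is $C\exp\bat{-\delta h_{\mathrm R}/\nu^2}$, which is $\leq C\nu\sqrt\tau$ for small $\nu$ provided $\delta>1/2$ -- so I should choose $\delta\in(1/2,1)$ throughout, large enough for both inequalities, which is consistent. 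The bound on $\abs{1-\psi_{j-1}\at{p}}$ on $I_j$ is handled symmetrically: $1-\psi_{j-1}$ is non-constant only on $\ccinterval{P_{j-1}}{\ul R_{j-1}}$, no, on the right part of $K_{j-1}$... in any case by the same one-sided Laplace truncation near $Q_{j-1}$ with the $\delta h_{\mathrm L}\geq\delta h_{\mathrm R}$ gap, giving an even smaller bound.

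\textbf{Main obstacle.} The routine parts are the Laplace upper/lower bounds; the one point requiring care is the matching of exponential rates -- ensuring that a single choice of $\delta\in(1/2,1)$ simultaneously makes all four truncated-tail ratios (two for $\ga_j$, one for $\psi_j$, one for $1-\psi_{j-1}$) dominated by $\nu\sqrt\tau=\nu\sqrt{c_{\mathrm K}}\,\mhexp{-h_{\mathrm R}/(2\nu^2)}$, uniformly in $j$. The uniformity in $j$ is free from periodicity, but one must be slightly careful that the implied constants in the Laplace expansions depend only on $H$, $\si$, $\delta$ through $\zeta=\sup\abs{H'''}$ and the curvatures $H''\at{P_0}$, $H''\at{Q_0}$ (which is exactly what Assumption \ref{Ass:Pot} guarantees are non-degenerate), and not on $\nu$. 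I expect no genuine difficulty beyond this bookkeeping.
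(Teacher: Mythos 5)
Your strategy is in spirit the paper's own (cut the well at fixed energy levels near the two barriers, then use Laplace-type estimates for the Gibbs tail and for the transition functions, exploiting $h_{\mathrm{R}}\leq h_{\mathrm{L}}$ and the scaling \eqref{Eqn:Kramers}), but the exponent bookkeeping that you yourself single out as the crucial point does not close. Your tail estimate for $\ga_j$ produces $C\exp\at{-(1-\delta)h_{\mathrm{R}}/\nu^2}$ and therefore requires $1-\delta\geq 1/2$, i.e.\ $\delta\leq 1/2$ (strictly, as you state it, since your crude bound has no $\nu$-prefactor to spare), whereas your estimates for $\psi_j$ and $1-\psi_{j-1}$ produce $C\exp\at{-\delta h_{\mathrm{R}}/\nu^2}$ and require $\delta\geq 1/2$ (again strictly, as you state it). Declaring that ``a single choice of $\delta\in(1/2,1)$'' works for all four ratios is therefore not consistent: for any $\delta>1/2$ the tail bound $C\exp\at{-(1-\delta)h_{\mathrm{R}}/\nu^2}$ is exponentially \emph{larger} than $\nu\sqrt{\tau}\sim\nu\exp\at{-h_{\mathrm{R}}/(2\nu^2)}$ as $\nu\to0$, and for any $\delta<1/2$ the $\psi$-bound fails symmetrically. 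The only admissible cut level is exactly $\delta=1/2$, i.e.\ the half-height of each barrier, which is precisely the paper's choice of $\ul{R}_j,\ol{R}_j$ (midpoints of $H_\eff$ between $P_j$ and $Q_{j-1}$, resp.\ $Q_j$). But at $\delta=1/2$ there is no exponential slack left to absorb lost powers of $\nu$, so the ``maximum times length'' bound you use for the tails is insufficient; one needs the boundary Laplace (Watson) estimate at the cut point, where $H_\eff^\prime\neq0$, giving a factor $\nu^2/\abs{H_\eff^\prime\at{\ol{R}_j}}$, which after division by $\mu_j\sim C\nu\exp\at{-H_\eff\at{P_j}/\nu^2}$ (resp.\ $\eta_j\sim C\nu\exp\at{H_\eff\at{Q_j}/\nu^2}$) yields exactly $C\nu\exp\at{-h_{\mathrm{R}}/(2\nu^2)}=C\nu\sqrt{\tau/c_{\mathrm{K}}}$ for all four terms. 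That is the content of the two displayed Laplace formulas in the paper's proof; without this refinement your argument at $\delta=1/2$ is short by one factor of $\nu$.

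There are also definitional slips that would make the written argument fail literally. Having chosen $\ul{R}_j<P_j<\ol{R}_j$ inside the well $J_j$, the interval must be $I_j=\oointerval{\ul{R}_j}{\ol{R}_j}$ around the minimum $P_j$, not $\ccinterval{\ol{R}_{j-1}}{\ul{R}_j}$, which straddles the barrier $Q_{j-1}$, where $1-\psi_{j-1}$ is of order one and the complementary-mass statement is empty. Likewise $\psi_j$ has to be read as the transition from $0$ (for $p\leq P_j$) to $1$ (for $p\geq P_{j+1}$) across $K_j$ -- consistent with \eqref{Eqn:DefPsi.1} and the jump weights in the proof of Proposition \ref{Prop:SubstituteMasses} -- so that on $I_j$ one bounds $\psi_j\at{p}=\eta_j^{-1}\int_{P_j}^{p}\ga\at{q}^{-1}\dint{q}\leq\eta_j^{-1}\int_{P_j}^{\ol{R}_j}\ga\at{q}^{-1}\dint{q}$ and, symmetrically, $1-\psi_{j-1}$ on $\ccinterval{\ul{R}_j}{P_j}$; your monotonicity inequality $\int_{P_{j-1}}^{p}\leq\int_{P_{j-1}}^{\ol{R}_{j-1}}$ for $p\geq\ol{R}_{j-1}$ points the wrong way. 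None of this requires a new idea, but as written the proposal does not establish the lemma; repairing the choice of cut level and the prefactors leads you exactly to the paper's argument.
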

\begin{proof} 
For any $j$ we can -- thanks to the monotonicity properties of $H^\prime$, see Assumption \ref{Ass:Pot} -- choose $\ul{R}_j$ and $\ol{R}_j$ such that
\begin{align*}
Q_{j-1}<\ul{R}_j<P_j\,,\qquad H\at{\ul{R}_j}-\si \ul{R}_j = \tfrac12\Bat{H\at{Q_{j-1}}+H\at{P_j}-\si\at{Q_{j-1}+P_j}}
\end{align*}
and
\begin{align*}
P_{j}<\ol{R}_j<Q_{j}\,,\qquad H\at{\ol{R}_j}-\si \ol{R}_j = \tfrac12\Bat{H\at{Q_{j}}+H\at{P_j}-\si\at{Q_{j}+P_j}}\,,
\end{align*}
see Figure \ref{Fig:AuxResult} for an illustration. We define
\begin{align*}
I_j:=\oointerval{\ul{R}_j}{\ol{R}_j}\,.
\end{align*}
an using the Laplace method -- compare also the asymptotic formula for $\mu_j$ in  \eqref{Eqn:Lem:AsympIntegrals.PEqn1} -- we compute

\begin{align*}
\int\limits_{P_j}^{\ol{R}_j} \ga_j\at{p}\dint{p}=C\nu\exp\at{-\frac{h_{\mathrm{L}}}{2\nu^2}}\bat{1\pm\DO{\nu}}
\end{align*}
as well as
\begin{align*}
\int\limits_{\ol{R}_j}^{P_j} \ga_j\at{p}\dint{p}=C\nu\exp\at{-\frac{h_{\mathrm{R}}}{2\nu^2}}\bat{1\pm\DO{\nu}}\,.
\end{align*}
These formulas imply the first claim due to the time scaling \eqref{Eqn:Kramers} and since $\si\geq0$ guarantees $h_{\mathrm{L}}\geq h_{\mathrm{R}}$. Finally, in view of \eqref{Eqn:DefPsi.1}+\eqref{Eqn:DefPsi.2} the second claim can be justified along the same lines.
\end{proof}
The main result in this section can be formulated as follows and controls the pointwise approximation error of the substitute masses in terms of the pointwise dissipation $D$ and the total mass $m$ from \eqref{Eqn:TotMass} and \eqref{Eqn:FormulaDiss}, respectively. 
\begin{proposition}[dissipation bounds approximation error]
\label{Prop:ErrorBounds}
We have
\begin{align*}
\sum_{j\in\Zset} \babs{m_j\pair{t}{x}-\ol{m}_j\pair{t}{x}}+\babs{m_j\pair{t}{x}-\widetilde{m}_j\pair{t}{x}}\leq C\tau^{-1/2}\nu^{2}D\pair{t}{x}+C\tau^{1/2}\nu^{-2}m\pair{t}{x}
\end{align*}
for some constant $C$ independent of $\nu$.
\end{proposition}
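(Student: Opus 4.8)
The plan is to work at fixed $\pair{t}{x}$ and estimate each difference $m_j-\ol{m}_j$ and $m_j-\widetilde m_j$ separately, using the weighted $\fspaceH^1$–$\fspaceL^\infty$ control of $w_j$ afforded by the partial dissipation $D_j$ from \eqref{Eqn:FormulaDiss.2}. The elementary fact I would rely on throughout is the following: if $g$ is a probability density on an interval $J$ and $f\in\fspaceH^1(J,g\dint p)$, then for any $p,q\in J$ one has $\abs{f(p)-f(q)}^2\leq \bat{\int_J (f')^2 g\dint p}\bat{\int_{\min\{p,q\}}^{\max\{p,q\}} g^{-1}\dint p}$ by Cauchy–Schwarz, and more globally $\int_J \abs{f^2-f(P_j)^2}g\dint p$ is controlled by a product of $\int_J (f')^2 g$, $\int_J f^2 g$ and a weight-dependent geometric factor. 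Applied with $g=\ga_j$, $f=w_j$, this converts $D_j$ and $m_j$ (which is $\int_J w_j^2\ga_j$) into pointwise and integrated bounds on $w_j$.

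First I would bound $\abs{m_j-\ol{m}_j}$. Write $m_j-\ol m_j=\int_{J_j}\bat{w_j(p)^2-w_j(P_j)^2}\ga_j(p)\dint p$. Split the integral over $I_j$ and $J_j\setminus I_j$ using the interval $I_j$ from Lemma \ref{Lem:AuxResult}. On $J_j\setminus I_j$ one uses the small-mass bound $\int_{J_j\setminus I_j}\ga_j\leq C\nu\sqrt\tau$ together with $\int_{J_j}w_j^2\ga_j=m_j\leq m$ and a crude sup-bound on $w_j^2$ coming from $D_j$ and $m_j$ (via the $\fspaceH^1\hookrightarrow\fspaceL^\infty$ estimate above), producing terms of the claimed form $C\tau^{1/2}\nu^{-2}m$ and $C\tau^{-1/2}\nu^2 D_j$. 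On $I_j$, since $\ga_j$ is strongly concentrated near $P_j$ with $\int_{I_j}\ga_j^{-1}$ over the relevant sub-interval of order $\nu^2/(\mu_0\eta_0)\sim\nu^4/\tau$ — here is where Lemma \ref{Lem:AsympIntegrals}'s relation $\tau\mu_0\eta_0\sim\nu^2$ enters — the Cauchy–Schwarz inequality $\abs{w_j(p)-w_j(P_j)}^2\leq D_j\int_{P_j}^{p}\ga_j^{-1}$ followed by $\abs{w_j^2-w_j(P_j)^2}\leq\abs{w_j-w_j(P_j)}\cdot(\abs{w_j}+\abs{w_j(P_j)})$ and one more Cauchy–Schwarz over $\ga_j$ yields a bound $\lesssim \sqrt{D_j}\cdot\sqrt{D_j\nu^4/\tau}^{1/2}\cdot\sqrt{m_j}$-type expression that, after Young's inequality, is absorbed into $C\tau^{-1/2}\nu^2 D_j+C\tau^{1/2}\nu^{-2}m_j$.

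Next, $\abs{m_j-\widetilde m_j}$. By \eqref{Eqn:DefMass}, \eqref{Eqn:SubstMass.2} and \eqref{Eqn:DefPsi.2}, the difference is $\int_{\Rset}\bat{\chi_{J_j}-(\psi_{j-1}-\psi_j)}\varrho\dint p$, which is supported in $K_{j-1}\cup K_j$ (the transition layers near $Q_{j-1}$ and $Q_j$) and bounded by $\int_{J_{j}}\psi_j\varrho+\int_{J_j}(1-\psi_{j-1})\varrho+\text{(analogous neighbouring terms)}$. On the $I_j$-part of each interval, Lemma \ref{Lem:AuxResult} gives $\abs{\psi_j}+\abs{1-\psi_{j-1}}\leq C\nu\sqrt\tau$, so that part is $\leq C\nu\sqrt\tau\int_{J_j}\varrho\leq C\nu\sqrt\tau\,\nu^{-2}\,m_j\cdot$(something) — more precisely one rewrites $\varrho$ on $J_j$ as $w_j^2\ga/\mu_j$ and uses $\int_{I_j\cap\text{layer}}\ga/\mu_j$ is exponentially small, while on $J_j\setminus I_j$ one again invokes the small-mass estimate and the sup-bound on $w_j^2$. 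Collecting, every piece is again of the form $C\tau^{-1/2}\nu^2 D_j+C\tau^{1/2}\nu^{-2}m_j$; summing over $j\in\Zset$ and using $D=\sum_j D_j$, $m=\sum_j m_j$ gives the stated inequality.

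The main obstacle I expect is bookkeeping the interplay of the three small scales: the peak width $\DO\nu$, the tunnelling factor $\sqrt\tau=\exp(-\min\{h_L,h_R\}/2\nu^2)$, and the combination $\mu_0\eta_0\sim\nu^2/\tau$ from Lemma \ref{Lem:AsympIntegrals}, so that each geometric weight integral ($\int\ga_j$, $\int\ga_j^{-1}$, $\int\ga/\mu_j$) contributes the exact power of $\nu$ and $\tau$ needed for the two advertised error terms and nothing worse. In particular one must be careful that the sup-bound on $w_j^2$ used to control the tails does not itself carry a bad factor — it should be $\sup w_j^2\leq C(m_j+\nu^{-2}\tau\,D_j)$ or similar, derived from the $\fspaceH^1$–$\fspaceL^\infty$ embedding on $(J_j,\ga_j)$ — and that the Young's-inequality splittings are done uniformly in $j$, which is where the $j$-independence of $h_L$, $h_R$, $c_{\mathrm K}$, $\mu_0$, $\eta_0$ from Lemma \ref{Lem:AsympIntegrals} is essential. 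Once the scale accounting is pinned down, each individual estimate is a routine Cauchy–Schwarz/Young argument.
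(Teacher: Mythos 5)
There is a genuine gap, and it sits exactly where you locate the main obstacle: the scale bookkeeping for the weighted $\fspaceH^1$--$\fspaceL^\infty$ step. The embedding constant you need is $\sup_{p\in J_j}\babs{\int_{P_j}^p\ga_j\at{q}^{-1}\dint q}$, and this is \emph{exponentially large}: by Lemma \ref{Lem:AsympIntegrals} one has $\mu_0\eta_0\sim\nu^2/\tau$, so $\mu_j\int_{P_j}^{Q_j}\ga\at{q}^{-1}\dint q\sim\nu^2\tau^{-1}$, and on the left half of $J_j$ it is even of order $\nu^2\tau^{-1}\ka^{-1}$ because it involves $\eta_{j-1}$. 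Your two statements ``$\int_{I_j}\ga_j^{-1}$ of order $\nu^2/(\mu_0\eta_0)\sim\nu^4/\tau$'' and ``$\sup w_j^2\leq C\at{m_j+\nu^{-2}\tau D_j}$'' have these factors inverted ($\nu^2/(\mu_0\eta_0)\approx\tau$, and the correct sup-bound carries $\nu^2\tau^{-1}$, not $\nu^{-2}\tau$). With the correct constant, your tail estimate, namely (sup of $w_j^2$) times $\int_{J_j\setminus I_j}\ga_j\leq C\nu\sqrt\tau$, produces a dissipation term that exceeds the admissible $C\tau^{-1/2}\nu^2D_j$ by an exponential factor of order $\ka^{-1/2}=\exp\at{\si L/(2\nu^2)}$ coming from the left tail; the same defect recurs when you invoke the sup-bound inside the $\widetilde m_j$ estimate. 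So, as written, the argument does not close for $\si>0$ (only the untilted case $\si=0$ would survive).

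The idea you are missing is that no sup-bound is needed at all. The paper estimates $e_j:=\int_{J_j}\abs{w_j^2\at p-w_j^2\at{P_j}}\ga_j\at p\dint p$ on the whole of $J_j$ by writing $w_j^2\at p-w_j^2\at{P_j}=\int_{P_j}^p2w_j\partial_pw_j\dint q$, applying Cauchy--Schwarz with the weights $\ga_j^{\pm1}$, and then exploiting that $1/\ga_j$ is monotone on each side of $P_j$ to get $\int_{P_j}^pw_j^2/\ga_j\dint q\leq m_j/\ga_j\at p^2$; the resulting factor $1/\ga_j\at p$ cancels the outer weight $\ga_j\at p$, giving the clean bound $e_j\leq C\sqrt{m_jD_j}$ with $C\sim L$, uniformly in $j$ and with no exponential loss. (Your $I_j$-estimate can be repaired in the same spirit if you keep $\int_{P_j}^p\ga_j^{-1}\dint q$ \emph{inside} the $p$-integral rather than taking its supremum, since $\bat{\int_{P_j}^p\ga_j^{-1}\dint q}\ga_j\at p$ is pointwise of order $\nu^2$; but then the splitting into $I_j$ and $J_j\setminus I_j$ becomes unnecessary for $\ol m_j$.) Lemma \ref{Lem:AuxResult} enters only afterwards, for $m_j-\widetilde m_j$: there the paper bounds the tail mass by $\int_{J_j\setminus I_j}w_j^2\ga_j\dint p\leq2e_j+m_j\int_{J_j\setminus I_j}\ga_j\dint p$, i.e.\ again through $e_j$ rather than through a supremum of $w_j^2$, which yields $\abs{m_j-\widetilde m_j}\leq C\sum_{\abs{i-j}\leq1}\bat{\sqrt{m_iD_i}+\nu\sqrt\tau\, m_i}$; the advertised $\tau^{\pm1/2}\nu^{\mp2}$ structure appears only at the very end, from Cauchy--Schwarz in $j$ and Young's inequality applied to $\sqrt{mD}$. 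Your overall architecture (exact representation via the $\psi_j$, smallness of $\psi_j$ and $1-\psi_{j-1}$ on $I_j$, final Young splitting) matches the paper, but without the monotonicity/cancellation step the tail terms cannot be brought below the claimed bound.
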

\begin{proof} 
Since all arguments hold pointwise in space and time, we omit both the $t$- and the $x$-dependence in all quantities.
\par
\ul{\emph{Local approximation error for $\ol{m}$}}:  By direct computations we find, using H\"olders inequality,
\begin{align}
\label{Prop:ErrorBounds.PEqn1}
\begin{split}
e_j&:=\int\limits_{J_j}\abs{w_j^2\at{p}-w_j^2\at{P_j}}\ga_j\at{p}\dint{p}\leq\int\limits_{Q_{j-1}}^{Q_{j}}\int\limits_{P_j}^{p}\babs{2w_j\at{q}\partial_pw_j\at{q}}\dint{q}\,\ga_j\at{p}\dint{p}
\\&\leq%
2\int\limits_{Q_{j-1}}^{Q_{j}}\at{\int\limits_{P_j}^{p}\frac{w_j^2\at{q}}{\ga_j\at{q}}\dint{q}}^{1/2}\at{\int\limits_{P_j}^{p}\at{\partial_pw_j\at{q}}^2\ga_j\at{q}\dint{q}}^{1/2}\ga_j\at{p}\dint{p}\,.
\end{split}
\end{align}
Since $1/\ga_j\at{p}$ is strictly increasing on the interval $\ccinterval{P_j}{Q_{j}}$ we also have
\begin{align*}
\int_{P_j}^{p}\frac{w_j^2\at{q}}{\ga_j\at{q}}\dint{q}\leq \frac{1}{\ga_j^2\at{p}}\int_{P_j}^{p}w_j^2\at{q}\ga_j\at{q}\dint{q}\leq
\frac{m_j}{\ga_j\at{p}^2}\qquad \text{for}\qquad p\in\ccinterval{P_j}{Q_{j}}
\end{align*}
due to \eqref{Eqn:FormMass}, and combining this with the analogous estimate for $p\in\ccinterval{Q_{j-1}}{P_j}$ we demonstrate that \eqref{Prop:ErrorBounds.PEqn1} can be written as
\begin{align}
\label{Prop:ErrorBounds.PEqn2}
e_j\leq C\sqrt{m_j D_j}
\end{align}
with $D_j\at{t}$ as in \eqref{Eqn:FormulaDiss.2}. This yields
\begin{align}
\label{Prop:ErrorBounds.PEqnA}
\babs{m_j-\ol{m}_j}=\Babs{\int\limits_{J_j}\at{w_j^2\at{p}-w_j^2\at{P_j}}\ga_j\at{p}\dint{p}}\leq e_j\leq C\sqrt{m_j D_j}
\end{align}
thanks to \eqref{Eqn:FormMass}, \eqref{Eqn:SubstMass.1}, and since $\int_{J_j}\ga_j\at{p}\dint{p}=1$ holds by \eqref{Eqn:Lem:AsympIntegrals.Eqn0} and \eqref{Eqn:DefGaLoc}.
\par
\ul{\emph{Local approximation error for $\widetilde{m}$}}:
With $I_j$ as in Lemma \ref{Lem:AuxResult} and in view of \eqref{Eqn:FormMass} and \eqref{Eqn:SubstMass.1}  we find
\begin{align*}
\int\limits_{J_j\setminus I_j}w_j^2\at{p}\ga_j\at{p}\dint{p}&=m_j-\int\limits_{ I_j}w_j^2\at{p}\ga_j\at{p}\dint{p}
\\&=
\bat{m_j-\ol{m}_j}\int\limits_{ I_j}\ga_j\at{p}\dint{p} +
 m_j
\int\limits_{J_j\setminus I_j}\ga_j\at{p}\dint{p}
+\int_{ I_j}\Bat{w_j^2\at{P_j}-w_j^2\at{p}}\ga_j\at{p}\dint{p}
\\&
\leq 
2e_j+m_j\int\limits_{J_j\setminus I_j}\ga_j\at{p}\dint{p}\\&
\leq C\sqrt{m_j D_j}+m_j\int\limits_{J_j\setminus I_j}\ga_j\at{p}\dint{p}\,,
\end{align*}
where we employed \eqref{Prop:ErrorBounds.PEqn2} and \eqref{Prop:ErrorBounds.PEqnA} to derive the estimates. Combining this with Lemma \ref{Lem:AuxResult} we thus obtain
\begin{align}
\label{Prop:ErrorBounds.PEqn3}
\begin{split}
\int\limits_{J_j}\psi_j\at{p}w_j^2\at{p}\ga_j\at{p}\dint{p}&=
\int\limits_{J_j\setminus I_j}\psi_j\at{p}w_j^2\at{p}\ga_j\at{p}\dint{p}+
\int\limits_{I_j}\psi_j\at{p}w_j^2\at{p}\ga_j\at{p}\dint{p}
\\&\leq
\int\limits_{J_j\setminus I_j}w_j^2\at{p}\ga_j\at{p}\dint{p}+C\nu\sqrt{\tau} m_j
\\&\leq
C\sqrt{m_j D_j}+C\nu\sqrt{\tau} m_j
\end{split}
\end{align}
and analogously
\begin{align}
\label{Prop:ErrorBounds.PEqn4}
\int\limits_{J_j}\bat{1-\psi_{j-1}\at{p}}w_j^2\at{p}\ga_j\at{p}\dint{p}\leq
C\sqrt{m_j D_j}+C\nu\sqrt{\tau} m_j\,.
\end{align}
Moreover, from \eqref{Eqn:DefMass}, \eqref{Eqn:DefW}, \eqref{Eqn:SubstMass.2}, and the piecewise definition of $\psi_j$ -- see \eqref{Eqn:DefPsi.2} -- we deduce the exact representation formula
\begin{align}
\label{Prop:ErrorBounds.PEqn5}
\begin{split}
m_j-\widetilde{m}_j &=
-\int\limits_{J_{j-1}}\psi_{j-1}\at{p}
w_{j-1}^2\at{p}\ga_{j-1}\at{p}\dint{p}\;+\int\limits_{J_{j}}\bat{1-\psi_{j}\at{p}}
w_j^2\at{p}\ga_j\at{p}\dint{p}\\&\quad\quad \quad 
+\int\limits_{J_j}\psi_{j-1}\at{p}
w_j^2\at{p}\ga_j\at{p}\dint{p}\;-\int\limits_{J_{j+1}}\bat{1-\psi_{j}\at{p}}
w_{j+1}^2\at{p}\ga_{j+1}\at{p}\dint{p}\,,
\end{split}
\end{align}
where the four terms on the right hand side represent the approximation error from the intervals
$\ccinterval{P_{j-1}}{Q_{j-1}}$, $\ccinterval{Q_{j-1}}{P_j}$, $\ccinterval{P_j}{Q_j}$, and $\ccinterval{Q_j}{P_{j+1}}$, see Figure \ref{Fig:PlotWeights}. From \eqref{Prop:ErrorBounds.PEqn5} we finally obtain the estimate
\begin{align}
\label{Prop:ErrorBounds.PEqnB}
\abs{m_j-\widetilde{m}_j }\leq C\sum_{\abs{i-j}\leq1}\Bat{\sqrt{m_{i}D_{i}}+
\nu\sqrt{\tau} m_{i}}
\end{align}
by employing \eqref{Prop:ErrorBounds.PEqn3} on both $I_{j-1}$ and $I_j$ and
\eqref{Prop:ErrorBounds.PEqn4} on $I_j$ and $I_{j+1}$.
\par
\ul{\emph{Global approximation error}}: Due to the Cauchy-Schwarz estimate
and Young's inequality for products we have
\begin{align}
\label{Prop:ErrorBounds.PEqnC}
\sum_{j\in\Zset}\sqrt{m_jD_j}\leq \Bat{\sum_{j\in\Zset}m_j}^{1/2}\Bat{\sum_{j\in\Zset}D_j}^{1/2}=\sqrt{mD}\leq \tfrac12 \tau^{1/2}\nu^{-2} m+\tfrac12\tau^{-1/2}\nu^{2}D\,,
\end{align}
so the claim follows from summing up the local estimates \eqref{Prop:ErrorBounds.PEqnA} and \eqref{Prop:ErrorBounds.PEqnB}.
\end{proof}
For completeness we also derive an approximation result for other moments of $\varrho$.
\begin{corollary}[approximation of moment integrals] 
\label{Cor:Moments}
For any smooth and bounded weight functions $v$ we have
\begin{align*}
\abs{\int\limits_\Rset v\triple{t}{x}{p}\varrho\triple{t}{x}{p}\dint{p}-\sum_{j\in\Zset}m_j\pair{t}{x}v\triple{t}{x}{P_j}}
\leq C\tau^{-1/2}\nu^{2}D\pair{t}{x}+C\nu^2m\pair{t}{x}\,,
\end{align*}
for all $t\geq0$ and all $x\in\Rset^n$, where the constant $C$ depends on $v$ but not on $\nu$.
\end{corollary}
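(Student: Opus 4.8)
The plan is to localise the $p$-integral to the wells $J_j=\oointerval{Q_{j-1}}{Q_j}$ and to reduce the assertion to the estimates already obtained in \S\ref{sect:ErrorEstimates}, together with a single asymptotic fact about the local Gibbs measures $\ga_j$. Fixing $\pair{t}{x}$ and suppressing it in the notation, I would first observe that the $J_j$ cover $\Rset$ up to a null set, that $\varrho=w_j^2\ga_j$ on $J_j$ by \eqref{Eqn:DefGaLoc} and \eqref{Eqn:DefW}, and that $\int_{J_j}\ga_j\dint p=1$; together with \eqref{Eqn:FormMass} this gives the exact identity
\[
\int_\Rset v\,\varrho\dint p-\sum_{j\in\Zset}m_j\,v\at{P_j}=\sum_{j\in\Zset}\int_{J_j}\bat{v\at{p}-v\at{P_j}}\,w_j^2\at{p}\,\ga_j\at{p}\dint p\,.
\]
Splitting $w_j^2\at{p}=\ol{m}_j+\bat{w_j^2\at{p}-\ol{m}_j}$ and recalling $\ol{m}_j=w_j^2\at{P_j}$, the right-hand side decomposes into a \emph{fluctuation part} $\sum_j\int_{J_j}\bat{v\at{p}-v\at{P_j}}\bat{w_j^2\at{p}-\ol{m}_j}\ga_j\dint p$ and a \emph{mass-swap part} $\sum_j\ol{m}_j\int_{J_j}\bat{v\at{p}-v\at{P_j}}\ga_j\dint p$.

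For the fluctuation part I would only use $\babs{v\at{p}-v\at{P_j}}\leq C$ on $J_j$, which holds since $v$ is bounded; each summand is then bounded by $C$ times the quantity $e_j$ from \eqref{Prop:ErrorBounds.PEqn1}, so by \eqref{Prop:ErrorBounds.PEqn2} and the Cauchy-Schwarz/Young step \eqref{Prop:ErrorBounds.PEqnC} the fluctuation part is at most $C\sum_j\sqrt{m_jD_j}\leq C\tau^{1/2}\nu^{-2}m+C\tau^{-1/2}\nu^2D$. Since $\tau$ decays faster than any power of $\nu$ by \eqref{Eqn:Kramers}, we have $\tau^{1/2}\nu^{-2}\leq C\nu^2$, so the fluctuation part is $\leq C\nu^2m+C\tau^{-1/2}\nu^2D$, which already has the desired form.

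The main point is the mass-swap part, which needs the uniform bound $\babs{\int_{J_j}\bat{v\at{p}-v\at{P_j}}\ga_j\at{p}\dint p}\leq C\nu^2$ for every $j$. Here I would argue by Laplace's method exactly as in the proof of Lemma \ref{Lem:AsympIntegrals}: the normalised measure $\ga_j$ concentrates near $P_j$ with width of order $\nu$, its mean $\ol{P}_j:=\int_{J_j}p\,\ga_j\dint p$ satisfies $\ol{P}_j-P_j=\DO{\nu^2}$ — the leading Gaussian profile is even about $P_j$, so the first centred moment has size $\nu^2$ and is governed by the cubic coefficient $H_\eff^{\prime\prime\prime}\at{P_j}$ — and $\int_{J_j}\nabs{p-\ol{P}_j}^2\ga_j\dint p=\DO{\nu^2}$, both estimates being uniform in $j$ by the $L$-periodicity of $H$. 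Taylor-expanding $v$ to second order about $\ol{P}_j$ and using $\int_{J_j}\bat{p-\ol{P}_j}\ga_j\dint p=0$ then produces the claimed $\DO{\nu^2}$. Summing, the mass-swap part is at most $C\nu^2\sum_j\ol{m}_j$, and $\sum_j\ol{m}_j\leq m+\sum_j\babs{\ol{m}_j-m_j}\leq m+C\tau^{-1/2}\nu^2D+C\tau^{1/2}\nu^{-2}m$ by Proposition \ref{Prop:ErrorBounds}, so this contribution is again $\leq C\nu^2m+C\tau^{-1/2}\nu^2D$; adding the two parts proves the estimate. The only step beyond bookkeeping is this Laplace estimate for the first and second moments of $\ga_j$ — in particular the cancellation that makes $\ol{P}_j-P_j$ of order $\nu^2$ rather than $\nu$ — while everything else is a direct recycling of \S\ref{sect:ErrorEstimates}.
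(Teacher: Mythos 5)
Your proposal is correct and follows essentially the same route as the paper: your ``fluctuation'' and ``mass-swap'' parts are exactly the error terms $e_{\mathrm{a},\,j}$ and $e_{\mathrm{b},\,j}$ in the paper's proof (since $\ol{m}_j=w_j^2\at{P_j}$), bounded respectively by $C\sqrt{m_jD_j}$ as in \eqref{Prop:ErrorBounds.PEqn1}--\eqref{Prop:ErrorBounds.PEqn2} and by $C\nu^2\ol{m}_j$ via the Laplace-method estimate for the centred moments of $\ga_j$, with the same final summation using \eqref{Prop:ErrorBounds.PEqnC}, Proposition \ref{Prop:ErrorBounds}, and the Kramers scaling \eqref{Eqn:Kramers} to absorb $\tau^{1/2}\nu^{-2}$ into $C\nu^2$. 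Your explicit discussion of the $\DO{\nu^2}$ cancellation in $\ol{P}_j-P_j$ merely spells out what the paper leaves to ``the Laplace method and smoothness of $v$.''
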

\begin{proof} 
To ease the notation we omit again the $t$- and the $x$-dependence. Our definitions in \eqref{Eqn:DefGaLoc}, \eqref{Eqn:DefW}, and \eqref{Eqn:FormMass} imply
\begin{align*}
\int\limits_\Rset v\at{p}\varrho\at{p}\dint{p}&=
\sum_{j\in\Zset}\int\limits_{I_j} v\at{p}w_j^2\at{p}\ga_j\at{p}\dint{p}
= 
\sum_{j\in\Zset}\bat{v\nat{P_j}m_j+e_{\mathrm{a},\,j}+e_{\mathrm{b},\,j}}\,,
\end{align*}
where the error terms are given by
\begin{align*}
e_{\mathrm{a},\,j}:=\int\limits_{I_j}\bat{v\nat{p}-v\nat{P_j}}\bat{w_j^2\nat{p}-w_j^2\nat{P_j}}\ga_j\at{p}\dint{p},\qquad
e_{\mathrm{b},\,j}:=
\int\limits_{I_j}\bat{v\nat{p}-v\nat{P_j}}w_j^2\nat{P_j}\ga_j\at{p}\dint{p}\,.
\end{align*}
Similarly to the proof of Lemma \ref{Prop:ErrorBounds} -- cf. the estimates \eqref{Prop:ErrorBounds.PEqn1} and \eqref{Prop:ErrorBounds.PEqn2} -- we show
\begin{align*}
\babs{e_{\mathrm{a},\,j}}\leq C\sqrt{m_jD_j}\,,
\end{align*}
where we used that the moment weight $v$ is uniformly bounded on $I_j$, while \eqref{Eqn:SubstMass.1} and the Laplace method ensure that
\begin{align*}
\babs{e_{\mathrm{b},\,j}}\leq
\ol{m}_j\abs{\int\limits_{I_j}\bat{v\at{p}-v\nat{P_j}}\ga_j\at{p}\dint{p}}\leq C\ol{m}_j\nu^2
\end{align*}
since $\ga_j$ is localized near $p=P_j$ and because $v$ is sufficiently smooth. Thanks to  \eqref{Prop:ErrorBounds.PEqnC} the desired estimate follows after summation with respect to $j$ from Proposition \ref{Prop:ErrorBounds} and \eqref{Eqn:Kramers}. 
\end{proof}
%
%
%
\subsection{Passage to the limit \texorpdfstring{$\nu\to0$}{}}
\label{sect:limit}
%
In this section we pass to the limit $\nu$ and prove that partial masses of a solution to the Fokker-Planck equation \eqref{Eqn:PDE} converge to a solution of the limit dynamics as stated in \S\ref{sect:intro}. To this end we rely on the following assumption concerning the initial data, where
\begin{align}
\label{Eqn:DefV}
\calV\at{t}:=\int\limits_{\Rset^n}\int\limits_{\Rset}\bat{\abs{x}^2+p^2}\varrho\triple{t}{x}{p}\dint{p}\dint{x}
\end{align}
refers to the variance of $\varrho$.
\begin{assumption}[initial data] 
\label{Ass:InitialData}
The initial data are nonnegative and satisfy the normalization condition 
\begin{align*}
\int\limits_{\Rset^n}\int\limits_\Rset \varrho\triple{0}{x}{p}\dint{p}\dint{x}=1
\end{align*}
as well as the estimates
\begin{align*}
\calV\at{0}\leq C\,,\qquad  
\calE\at{0}\leq C
\end{align*}
for some constant $C$ independent of $\nu$, where the moments $\calV$ and the energy $\calE$ have been defined in \eqref{Eqn:DefV} and \eqref{Eqn:DefEnergy}, respectively.
\end{assumption}
The existence, uniqueness, and regularity of a smooth solution $\varrho$ are then guaranteed by standard results, see for instance \cite{Fri64} for a classical approach. In particular, the solution satisfies \eqref{Eqn:UnitMass} for all $t\geq0$ and this implies
\begin{align}
\label{Eqn:MassConservation}
\sum_{j\in\Zset}\int\limits_{\Rset^n} m_j\pair{t}{x}\dint{x}=1
\end{align}
Our first technical result in this section is to bound the total dissipation in the temporal $\fspaceL^1$-sense, which enables us to control the approximation errors from
Proposition \ref{Prop:ErrorBounds} in a time averaged sense. Notice that such estimates for the dissipation are not granted a priori because the energy is not bounded below 
but approaches the value $-\infty$ as $t\to\infty$. The key ingredients to our proof are the Wasserstein gradient structure as well as the estimates from Lemma \ref{Lem:EvolutionK}
for the moment $\calK$. The latter ensure that the moment $\calP$ grows nicely in time although we are not able to bound its time-derivative independently of $\nu$.
\begin{lemma}[$\fspaceL^1$-bound for the dissipation]
\label{Lem:Dissipation}
There exists a constant $C$ independent of $\nu$ such that
\begin{align*}
\int\limits_0^{T}\calD\at{t}\dint{t}\leq \tau\nu^{-4}C\at{1+T}
\end{align*}
holds for all $0<T<\infty$ and all sufficiently small $\nu>0$.
\end{lemma}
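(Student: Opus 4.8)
The plan is to integrate the energy-dissipation relation \eqref{Eqn:EnergyLaw} in time and use the lower bound on $\calE$ that comes from controlling $\calP$ via Lemma \ref{Lem:EvolutionK}. Concretely, I would first integrate \eqref{Eqn:EnergyLaw} over $\ccinterval{0}{T}$ to get
\begin{align*}
\tau\nu^2\int_0^T\calC\at{t}\dint{t}+\nu^4\int_0^T\calD\at{t}\dint{t}=\tau\bat{\calE\at{0}-\calE\at{T}}\,.
\end{align*}
Since $\calC\geq0$, it suffices to bound $\calE\at{0}-\calE\at{T}$ from above by $C\at{1+T}$. By Assumption \ref{Ass:InitialData} we have $\calE\at{0}\leq C$, so the whole task reduces to showing the lower bound $\calE\at{T}\geq -C\at{1+T}$, uniformly in $\nu$.

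For the lower bound on $\calE\at{T}$, I would split the free energy \eqref{Eqn:DefEnergy} into the entropy term $\nu^2\int\varrho\ln\varrho$, the periodic-potential term $\int H\at{p}\varrho$, and the tilting term $-\si\,\calP\at{T}$. The entropy term is bounded below by a standard argument: $\nu^2\int\varrho\ln\varrho\geq -C\nu^2\bat{1+\calV\at{T}}$ after comparing $\varrho$ with a Gaussian of the same variance (using $\int\varrho=1$ and the elementary inequality $-s\ln s\leq C_\eps\,e^{\eps s'}$ type bounds; more cleanly, $\int\varrho\ln\varrho\geq\int\varrho\ln g-$ relative entropy $\geq\int\varrho\ln g$ where $g$ is the Gaussian), and I would then need $\calV\at{T}\leq C\at{1+T}$, which follows from the heat-type diffusion in $x$ and a Grönwall argument for $\calV$ using the PDE \eqref{Eqn:PDE} — or more simply from $\calV\at{0}\leq C$ plus the bounded drift. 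The periodic term $\int H\at{p}\varrho$ is trivially bounded in absolute value by $\norm{H}_\infty\cdot1=C$ since $H$ is periodic and continuous, hence bounded. The only genuinely unbounded contribution is $-\si\,\calP\at{T}$, and this is where Lemma \ref{Lem:EvolutionK} enters: from the evolution identity $\at{1+\theta}\tfrac{\dint}{\dint t}\calK=\at{1-\ka}\sum_j\int\ol{m}_j\dint{x}$, together with $0<\ka<1$, $\abs{\theta}\leq C\nu^2$ (Lemma \ref{Lem:AsympIntegrals}), and the fact that $\sum_j\int\ol{m}_j\dint x$ is close to $\sum_j\int m_j\dint x=1$ by Proposition \ref{Prop:ErrorBounds} — or at least bounded by $C$ uniformly in $t$ — we get $\abs{\tfrac{\dint}{\dint t}\calK\at{t}}\leq C$, hence $\abs{\calK\at{T}}\leq\abs{\calK\at{0}}+CT$. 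The second estimate of Lemma \ref{Lem:EvolutionK} then gives $\abs{\calP\at{T}}\leq\abs{P_0+L\,\calK\at{T}}+C\leq C\at{1+T}$, provided $\calK\at{0}$ is controlled by the initial data (which follows from $\calP\at{0}\leq\calV\at{0}^{1/2}\leq C$ and \eqref{Eqn:BoundsPhi}).

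Combining these three pieces, $\calE\at{T}\geq -C\nu^2\at{1+\calV\at{T}}-C-\abs{\si}\,\abs{\calP\at{T}}\geq -C\at{1+T}$ for $\nu$ small, so $\tau\bat{\calE\at{0}-\calE\at{T}}\leq\tau C\at{1+T}$, and dividing the integrated energy balance by $\nu^4$ yields $\int_0^T\calD\at{t}\dint{t}\leq\tau\nu^{-4}C\at{1+T}$ as claimed.

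The main obstacle I anticipate is making the bound on $\sum_j\int\ol{m}_j\dint x$ (equivalently $\tfrac{\dint}{\dint t}\calK$) rigorous without circularity: Proposition \ref{Prop:ErrorBounds} bounds $m_j-\ol{m}_j$ in terms of $D_j$, whose time integral is exactly what we are trying to estimate. The clean way around this is to observe that we do not need $\sum\int\ol m_j\to1$; we only need $\sum_j\int\ol m_j\dint x\leq C$ uniformly, and since each $\ol m_j=w_j^2\at{P_j}\geq0$ one can bound $\int_{J_j}w_j^2\ga_j\geq \int_{I_j}w_j^2\ga_j\geq \ol m_j\int_{I_j}\ga_j - e_j \geq \tfrac12\ol m_j - C\sqrt{m_jD_j}$ for small $\nu$ (using Lemma \ref{Lem:AuxResult} and \eqref{Prop:ErrorBounds.PEqn2}), so that $\sum_j\ol m_j\leq 2\sum_j m_j + C\sqrt{mD}\leq 2m+C\sqrt{mD}$; integrating in $x$ and using \eqref{Eqn:MassConservation} plus Young gives $\sum_j\int\ol m_j\,\dint x\leq C+\tfrac12\tau^{-1/2}\nu^2\calD\at t/4$. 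This reintroduces $\calD\at t$, but now only with a \emph{small} (in fact $\Do 1$, since $\tau\to0$) prefactor, so after plugging into the energy balance and absorbing, the dissipation term on the right is absorbed into the left-hand side, closing the estimate. Handling this absorption carefully — choosing $\nu$ small enough that the absorbed coefficient is, say, $\leq\tfrac12$ — is the one delicate point; everything else is routine.
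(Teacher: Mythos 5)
Your overall architecture --- integrate the energy balance \eqref{Eqn:EnergyLaw}, reduce the problem to a lower bound on $\calE\at{T}$, control the only unbounded contribution $-\si\,\calP\at{T}$ through $\calK$ and Lemma \ref{Lem:EvolutionK}, and resolve the apparent circularity in bounding $\sum_j\int\ol{m}_j\dint{x}$ by keeping a small prefactor in front of $\calD\at{t}$ and absorbing it thanks to the exponential smallness of $\tau$ --- is exactly the paper's proof; in particular your absorption step is the same device the paper implements via Proposition \ref{Prop:ErrorBounds}.

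The genuine gap is the entropy/variance step. The claim $\calV\at{T}\leq C\at{1+T}$ is false and cannot follow from ``bounded drift'': the equation carries $\tau\partial_t$ on the left, so in the rescaled time the drift acts at rate $\tau^{-1}$, and the Young/Gr\"onwall argument only yields $\calV\at{T}\leq C\tau^{-2}\exp\at{T}$, which is \eqref{Lem:Dissipation.PEqn1} in the paper. In fact no bound of the form $C\at{1+T}$ can hold at all, since the effective transport moves the center of mass by roughly $L$ per unit rescaled time, so $\calP\at{T}\sim T$ and Cauchy--Schwarz forces $\calV\at{T}\gtrsim T^2$. Consequently your stated entropy bound $\nu^2\int\varrho\ln\varrho\geq-C\nu^2\at{1+\calV\at{T}}$ is too weak to be of use: combined with any a priori bound on $\calV$ that is actually available it produces terms of size $\nu^2\tau^{-2}$ (or at best $\nu^2T^2$), which destroys the asserted $\tau\nu^{-4}C\at{1+T}$ form. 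The repair is the one already hiding in your own parenthesis: the Gaussian comparison gives the logarithmic bound $\nu^2\int\varrho\ln\varrho\geq-C\nu^2\bat{1+\ln\calV\at{T}}$, and then the crude estimate $\calV\at{T}\leq C\tau^{-2}\exp\at{T}$ suffices because $\nu^2\ln\tau^{-1}\leq C$ by the Kramers scaling \eqref{Eqn:Kramers}; this yields $\calE\at{T}\geq-C\at{1+\nu^2T+\abs{\calK\at{T}}}$, after which the remainder of your argument (the $\calK$ bound with the small dissipation term and the final absorption) goes through unchanged and coincides with the paper's.
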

\begin{proof}
\emph{\ul{Lower bound for the energy:}} Using \eqref{Eqn:PDE} as well as integration by parts we verify
\begin{align*}
\tau\frac{\dint}{\dint{t}}\calV\at{t}&=2\at{\tau +\nu^2}-2\int\limits_{\Rset^n}\int\limits_{\Rset}p\bat{H^\prime\at{p}-\si}
\varrho\triple{t}{x}{p}\dint{p}\dint{x}\leq C\tau^{-1}+\tau\calV\at{t}\,,
\end{align*}
where we used the Young-type estimate 
\begin{align*}
2\abs{p\bat{H^\prime\at{p}-\sigma}}\leq \tau p^2+C\tau^{-1}\abs{H^\prime\at{p}-\sigma}^2\leq \tau p^2+C\tau^{-1}
\end{align*} 
as well as \eqref{Eqn:Kramers} and the conservation of mass, see \eqref{Eqn:MassConservation}. The comparison principle for scalar ODEs combined with Assumption \ref{Ass:InitialData} therefore yields
\begin{align}
\label{Lem:Dissipation.PEqn1}
\calV\at{T}\leq C\tau^{-2}\exp\at{T}\,.
\end{align}  
Since the Gaussian minimizes the convex Boltzmann entropy -- i.e., the integral of $\varrho\ln\varrho$ -- with prescribed zeroth and second moment we verify
\begin{align*}
\nu^2\int\limits_{\Rset^n}\int\limits_{\Rset}\varrho\triple{T}{x}{p}
\ln\bat{\varrho\triple{T}{x}{p}}\dint{p}\dint{x}
&\geq
C\nu^2\bat{-1-\ln{\calV\at{T}}}\\&\geq C\nu^2\bat{-1-T+\ln\tau}=C\at{-1-\nu^2T}\,,
\end{align*} 
where the first estimate stems from direct computations for Gaussian and the second one is provided by  \eqref{Lem:Dissipation.PEqn1} and the scaling law \eqref{Eqn:Kramers}. Moreover, since $H$ is bounded by Assumption \ref{Ass:Pot} we find
\begin{align*}
\int\limits_{\Rset^n}\int\limits_\Rset\bat{H\at{p}-\sigma p}\varrho\triple{T}{x}{p}\dint{p}\dint{x}\geq -C -\sigma\calP\at{t}
\end{align*}
with $\calP$ as in \eqref{Eqn:DefL}, while the properties of $\phi$ and $\calK$ in 
\eqref{Eqn:DefK} and \eqref{Eqn:BoundsPhi} imply
\begin{align*}
\babs{\calP\at{T}-L\,\calK\at{T}}\leq C\,.
\end{align*}
In summary, we have 
\begin{align}
\label{Lem:Dissipation.PEqn2}
\calE\at{T}\geq -C\at{1+\nu^2T+\babs{\calK\at{T}}}\,.
\end{align}
\emph{\ul{Upper bound for the dissipation:}} 
The energy balance \eqref{Eqn:EnergyLaw} provides
\begin{align}
\label{Lem:Dissipation.PEqn3}
0\leq \int\limits_{0}^T\bat{\calD\at{t}+\tau \,\nu^{-2}\,\calC\at{t}}\dint{t}\leq \tau\nu^{-4}\bat{\calE\at{0}-\calE\at{T}}
\end{align}
and Lemma \ref{Lem:EvolutionK} guarantees
\begin{align*}
\babs{\calK\at{T}-\calK\at{0}}\leq\int\limits_0^T\sum_{j\in\Zset}\,\int\limits_{\Rset^n}\ol{m}_j\pair{t}{x}\dint{x}\dint{t}=T+\int\limits_0^T\sum_{j\in\Zset}\,\int\limits_{\Rset^n}\babs{m_j\pair{t}{x}-\ol{m}_j\pair{t}{x}}\dint{x}\dint{t}\,,
\end{align*}
where we used that total mass is conserved due to \eqref{Eqn:MassConservation}. Exploiting Proposition \ref{Prop:ErrorBounds} and the conservation of mass we further get
\begin{align}
\label{Lem:Dissipation.PEqn4}
\begin{split}
\sum_{j\in\Zset}\,\int\limits_{\Rset^n}\babs{m_j\pair{t}{x}-\ol{m}_j\pair{t}{x}}\dint{x}
&\leq C\int\limits_{\Rset^n}\at{\tau^{-1/2}\nu^2D\pair{t}{x}+\tau^{1/2}\nu^{-2} m\pair{t}{x}}\dint{x}
\\&\leq \tau^{-1/2}\nu^{2}\calD\at{t}+\tau^{1/2}\nu^{-2}\,.
\end{split}
\end{align}
Assumption \ref{Ass:InitialData} ensures $\calE\at{0}+\abs{\calK\at{0}}\leq C$, so combining \eqref{Lem:Dissipation.PEqn2}, \eqref{Lem:Dissipation.PEqn3}, and \eqref{Lem:Dissipation.PEqn4} we arrive at
\begin{align*}
\int\limits_{0}^T\calD\at{t}\dint{t}\leq C\tau\nu^{-4}\at{1+T+\int\limits_0^T\Bat{\tau^{-1/2}\nu^{2}\calD\at{t}+\tau^{1/2}\nu^{-2}}\dint{t}}\,.
\end{align*}
The thesis now follows from rearranging terms and since $\tau$ is exponentially small in $\nu$ according to Kramers' law \eqref{Eqn:Kramers}.
\end{proof}
We are now able to prove our main result on the dynamics in the small diffusivity limit $\nu\to0$. To ease the notation we restrict ourselves to the case $0<\si<\si_*$ but emphasize that all arguments can be easily adapted to the cases $\si_*<\si<0$ and $\si=0$.
\begin{theorem}[limit dynamics]
\label{Thm:Convergence}
For $0<\si<\si_*$ and fixed $0<T<\infty$ we have
\begin{align}
\label{Thm:Convergence.Eqn1}
\sum_{j\in\Zset}\int\limits_0^T\int\limits_{\Rset^n}\babs{\breve{m}_j\pair{t}{x}-m_j\pair{t}{x}}\dint{x}\dint{t}\leq C\nu^2\at{1+T^2}\,,
\end{align}
where $\breve{m}$ denotes the unique solution to the initial value problem 
\begin{align}
\label{Thm:Convergence.Eqn2}
\partial_t \breve{m}_j\pair{t}{x}-\Delta_x\breve{m}_j\pair{t}{x}=\breve{m}_{j-1}\pair{t}{x}-\breve{m}_j\pair{t}{x}\,,\qquad
\breve{m}_j\pair{0}{x}=\widetilde{m}_j\pair{0}{x}
\end{align}
and depends on $\nu$ via the initial data.
\end{theorem}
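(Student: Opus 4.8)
The plan is to exploit the three main ingredients assembled above -- the substitute-mass balance (Proposition \ref{Prop:SubstituteMasses}), the dissipation bound on the approximation error (Proposition \ref{Prop:ErrorBounds}), and the $\fspaceL^1$-bound on the dissipation (Lemma \ref{Lem:Dissipation}) -- together with a standard stability estimate for the limit lattice equation \eqref{Thm:Convergence.Eqn2}. First I would introduce the abbreviations $\Len^1_{t,x}$ for the norm $\sum_j\int_0^T\int_{\Rset^n}\abs{\cdot}\dint{x}\dint{t}$ and its spatial counterpart $\Len^1_x$, and record the two key smallness facts that follow from Lemma \ref{Lem:Dissipation} and the scaling \eqref{Eqn:Kramers}: on the one hand $\int_0^T\calD\at{t}\dint{t}\leq \tau\nu^{-4}C(1+T)$, so that $\tau^{-1/2}\nu^2\int_0^T\calD\dint{t}\leq C\tau^{1/2}\nu^{-2}(1+T)$, and on the other hand $\tau^{1/2}\nu^{-2}$ is exponentially small in $\nu$, hence $\leq C\nu^2$ for $\nu$ small. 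Integrating the pointwise estimate of Proposition \ref{Prop:ErrorBounds} over $[0,T]\times\Rset^n$ and using mass conservation \eqref{Eqn:MassConservation} then gives
\begin{align*}
\sum_{j\in\Zset}\int\limits_0^T\int\limits_{\Rset^n}\bat{\babs{m_j-\ol m_j}+\babs{m_j-\widetilde m_j}}\dint{x}\dint{t}\leq C\nu^2\at{1+T}\,.
\end{align*}

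Next I would turn Proposition \ref{Prop:SubstituteMasses} into an equation for $\widetilde m_j$ with controlled right-hand side. Dividing \eqref{Prop:SubstituteMasses.Eqn1} by $1+\theta$ and using $\abs{\theta}\leq C\nu^2$ from \eqref{Eqn:Lem:AsympIntegrals.Eqn1b} together with $\ka=\exp(-\si L/\nu^2)\leq C\nu^2$ (valid since $\si>0$ here), one gets
\begin{align*}
\partial_t\widetilde m_j-\Delta_x\widetilde m_j=\ol m_{j-1}-\ol m_j+r_j\pair{t}{x}\,,
\end{align*}
where the remainder $r_j$ collects the $\theta$- and $\ka$-corrections and satisfies $\sum_j\int_0^T\int_{\Rset^n}\abs{r_j}\dint{x}\dint{t}\leq C\nu^2(1+T)$ (using mass conservation to absorb the $\sum_j\int\ol m_j$-type terms, bounded via $\sum_j\int\ol m_j\leq 1+\sum_j\int\abs{m_j-\ol m_j}$ and the time integral of the latter is $O(\nu^2)$). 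Replacing $\ol m_{j-1}-\ol m_j$ by $\widetilde m_{j-1}-\widetilde m_j$ introduces a further error bounded in the same norm by $C\nu^2(1+T)$, via the triangle inequality $\abs{\ol m_i-\widetilde m_i}\leq\abs{\ol m_i-m_i}+\abs{m_i-\widetilde m_i}$ and the displayed bound above. Hence $\widetilde m$ solves the target lattice equation \eqref{Thm:Convergence.Eqn2} up to an $\Len^1_{t,x}$-error of size $C\nu^2(1+T)$, with the correct initial datum by construction in \eqref{Thm:Convergence.Eqn2}.

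The final step is a Gronwall-type stability estimate: set $d_j\deq\widetilde m_j-\breve m_j$, which satisfies $\partial_t d_j-\Delta_x d_j=d_{j-1}-d_j+r_j+(\text{substitution error})_j$ with $d_j\pair{0}{x}=0$. Integrating $\sgn(d_j)$ against this equation, summing over $j$ and integrating over $\Rset^n$, the Laplacian term drops (Kato's inequality / integration by parts), the discrete transport term $\sum_j\int\sgn(d_j)(d_{j-1}-d_j)\dint x\leq 0$ by the shift-contraction property of $\ell^1$, and one is left with $\tfrac{\dint}{\dint t}\norm{d\at{t}}_{\ell^1(\Len^1_x)}\leq \sum_j\int_{\Rset^n}\abs{r_j+(\text{subst.err.})_j}\dint x$; integrating in $t$ and using the $\Len^1_{t,x}$-bound $C\nu^2(1+T)$ on the right-hand side forcing term gives $\int_0^T\norm{d\at{t}}\dint t\leq C\nu^2(1+T)\cdot T\leq C\nu^2(1+T^2)$. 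Combining with the already-established $\Len^1_{t,x}$-bound $C\nu^2(1+T)$ on $m_j-\widetilde m_j$ and the triangle inequality yields \eqref{Thm:Convergence.Eqn1}. I would also remark that existence and uniqueness of $\breve m$ in the relevant space follows from the same contraction estimate (the right-hand side of \eqref{Thm:Convergence.Eqn2} is a bounded operator on $\ell^1$, so the heat-semigroup Duhamel formula converges), so the theorem statement is well-posed.

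\textbf{Main obstacle.} The delicate point is the bookkeeping of the $r_j$ and substitution-error terms in the $\Len^1_{t,x}$-norm: one must be careful that summing the local estimates of Proposition \ref{Prop:ErrorBounds} over $j$ and integrating over $x$ really produces the global dissipation $\calD\at{t}$ (this uses the Cauchy--Schwarz step \eqref{Prop:ErrorBounds.PEqnC} together with mass conservation), and that the $\ka$- and $\theta$-prefactors, although exponentially small pointwise, do not interact badly with the sum over infinitely many wells. The Gronwall argument itself is standard once the forcing is controlled; the real work is in verifying that all error contributions are genuinely $O(\nu^2(1+T^2))$ in the combined space-time-lattice $\Len^1$ norm, for which the exponential smallness of $\tau^{1/2}\nu^{-2}$ is essential.
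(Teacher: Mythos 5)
Your proposal is correct and follows essentially the same route as the paper: control $m_j-\widetilde m_j$ and the $\ka$-, $\theta$-corrections in the space-time $\fspaceL^1$-norm via Proposition \ref{Prop:ErrorBounds} and Lemma \ref{Lem:Dissipation}, then compare $\widetilde m$ with $\breve m$ using the $\ell^1$-contraction property of the limit lattice dynamics, and conclude by the triangle inequality. The only cosmetic difference is that you establish the stability step by testing with $\sgn(d_j)$ (Kato/Gronwall), whereas the paper invokes Duhamel's principle together with the non-expansiveness of the explicit limit semigroup -- the same underlying fact.
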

\begin{proof} 
\emph{\ul{Error terms and bounds}}: Proposition \eqref{Prop:SubstituteMasses} provides
\begin{align*}
\partial_t \widetilde{m}_j\pair{t}{x}-\Delta_x\widetilde{m}_j\pair{t}{x}=\widetilde{m}_{j-1}\pair{t}{x}-\widetilde{m}_j\pair{t}{x}+\frac{f_j\pair{t}{x}+g_j\pair{t}{x}+h_j\pair{t}{x}}{1+\theta}\,,
\end{align*}
where the error terms on the right hand are given by
\begin{align*}
f_j:=\bat{\ol{m}_{j-1}-\widetilde{m}_{j-1}}-\,\bat{\ol{m}_j-\widetilde{m}_{j}}
\end{align*}
as well as
\begin{align*}
g_j:=\ka\bat{\ol{m}_{j+1}-\widetilde{m}_{j+1}}-\ka\bat{\ol{m}_{j}-\widetilde{m}_{j}}\,.
\end{align*}
and
\begin{align*}
h_j:=\ka\bat{\widetilde{m}_{j+1}-\widetilde{m}_j}+\theta\bat{ \widetilde{m}_j-\widetilde{m}_{j-1}}
\end{align*}
From Proposition \ref{Prop:ErrorBounds}, Lemma \ref{Lem:Dissipation} and \eqref{Eqn:Lem:AsympIntegrals.Eqn1a} we infer the estimate
\begin{align*}
\sum_{j\in\Zset}\int\limits_0^T\int\limits_{\Rset^n} \babs{f_j\pair{t}{x}}+\babs{g_j\pair{t}{x}}\dint{x}\dint{t}\leq C\int\limits_0^T\Bat{\tau^{-1/2}\nu^2\calD\at{t}+\tau^{1/2}\nu^{-2}}\dint{t}\leq C\tau^{1/2}\nu^{-2}\at{1+T}\,,
\end{align*}
while the conservation of mass combined with \eqref{Eqn:MassIdentity} gives
\begin{align*}
\sum_{j\in\Zset}\int\limits_0^T\int\limits_{\Rset^n} \babs{h_j\pair{t}{x}}\dint{x}\dint{t}\leq
2\at{\ka+\theta}\sum_{j\in\Zset}\int\limits_0^T\int\limits_{\Rset^n}\widetilde{m}_j\pair{t}{x}
\dint{x}\dint{t}=C\at{\ka+\theta}T\,.
\end{align*}
\emph{\ul{Properties of the limit dynamics}}: The linear limit model gives rise to well-defined  semigroup which is non-expansive with respect to the natural $\fspaceL^1$-norm (sums over $j$ and integrals with respect to  $x$) as it preserves the positivity and conserves mass, see also the explicit formula for the fundamental solution in \eqref{Eqn:FundSol}. We can therefore apply Duhamel's principle to the difference $\breve{m}-\widetilde{m}$ and obtain
\begin{align*}
\sum_{j\in\Zset}\int\limits_{\Rset^n}\, \babs{\breve{m}_j\pair{t}{x}-\widetilde{m}_j\pair{t}{x}}\dint{x}\leq\int_0^t e\at{s}\dint{s}\,,
\end{align*}
where 
\begin{align*}
e\at{t}:=\at{1+\theta}^{-1}\sum_{j\in\Zset}\,\int\limits_{\Rset^n}\Bat{\babs{f_j\pair{t}{x}}+\babs{g_j\pair{t}{x}
}+\babs{h_j\pair{t}{x}}}\dint{x}\,.
\end{align*}
\emph{\ul{Concluding arguments}}: All partial results derived so far imply
\begin{align*}
\sum_{j\in\Zset}\,\int\limits_0^T\int\limits_{\Rset^n} \babs{m_j\pair{t}{x}-\widetilde{m}_j\pair{t}{x}}\dint{x}\dint{t}&\leq
\int\limits_0^T \int\limits_0^t e\at{s}\dint{s}\dint{t}\leq T\int\limits_0^Te\at{t}\dint{t}
\\&\leq  C \at{T+T^2}\at{\tau^{1/2}\nu^{-2}+\ka+\theta}
\leq C \at{T+T^2}\nu^2
\end{align*} 
where the last estimate holds thanks to the scaling laws for $\tau$, $\kappa$ and $\theta$, see \eqref{Eqn:Kramers},  \eqref{Eqn:Lem:AsympIntegrals.Eqn1a}, and \eqref{Eqn:Lem:AsympIntegrals.Eqn1b}. The thesis now follows since 
\begin{align*}
\sum_{j\in\Zset}\,\int\limits_0^T\int\limits_{\Rset^n} \babs{m_j\pair{t}{x}-\widetilde{m}_j\pair{t}{x}}\dint{x}\dint{t}\leq 
C\tau^{1/2}\nu^{-2}\at{1+T}
\end{align*}
is another consequence of Proposition \ref{Prop:ErrorBounds} and Lemma \ref{Lem:Dissipation}. 
\end{proof}
The rather large  error in \eqref{Thm:Convergence.Eqn1} stems from the estimate $\abs{\theta}=\DO{\nu^2}$. If we replaced the time scaling \eqref{Eqn:Kramers} by the refined but less explicit law
\begin{align*}
\tau = \frac{\nu^2}{\mu_0\eta_0}=c_{\mathrm{K}}\exp\at{-\frac{\min\{h_{\mathrm{L}},\,h_{\mathrm{R}}\}}{\nu^2}}\Bat{1+\DO{\nu^2}}
\end{align*}  
with 
$\nu$-dependent integrals $\mu_0$, $\eta_0$ as in \eqref{Eqn:Lem:AsympIntegrals.Eqn1a}, the approximation error would be of order $\DO{\ka+\tau^{1/2}\nu^{-2}}$ and hence exponentially small in $\nu$. Notice also that the initial data for $\breve{m}$ in \eqref{Thm:Convergence.Eqn2} are defined in terms of $\widetilde{m}_j\pair{0}{x}$ instead of  $m_j\pair{0}{x}$. The difference $\sum_{j\in\Zset}\int_{\Rset^n}\abs{\widetilde{m}_j\pair{0}{x}-m_j\pair{0}{x}}\dint{x}$ is 
small for sufficiently nice initial data -- for instance, if the initial dissipation $\calD\at{0}$ is small -- and can only be large if a non-negligible amount of the initial mass is concentrated in the $\nu$-vicinity of the local maxima of the effective potential, i.e., near the $Q_j$'s.  In the latter case a fast transient dynamics can/will produce rapid changes in the masses $m_j$ while the substitute masses $\widetilde{m}_j$ still evolve quite regularly according to the limit dynamics. 
\par
We finally mention that the combination of Theorem \ref{Thm:Convergence} and Corollary \ref{Cor:Moments} implies  the time-dependent probability measure $\varrho$ can in fact be approximated as in \eqref{Eqn:ApproxRho}. Moreover, adapting the arguments from in the proof of Corollary \ref{Cor:Moments} we also verify
\begin{align*}
\calP\at{t}\approx \sum_{j\in\Zset}\;{P_j}\int\limits_{\Rset^n}{m}_j\pair{t}{x}\dint{x}\,,\qquad \calV\at{t}\approx \sum_{j\in\Zset}\;{P_j}^2\int\limits_{\Rset^n}{m}_j\pair{t}{x}\dint{x}
\end{align*}
where the error terms can be bounded for $0\leq t\leq T$ explicitly in terms of $\nu$ and $\int_0^T\calD\at{t}\dint{t}$. 
\appendix
%
\section{Mass transport in the supercritical regime}\label{sect:appA}
%
In this appendix we show that appropriately defined moment integrals are also useful in the 
supercritical (or ballistic) case $\si\notin\ccinterval{\si_*}{\si^*}$, in which the effective potential for \eqref{Eqn:PDE} has no local extrema, see the right panel of Figure \ref{Fig:PlotEffPot}. For simplicity we restrict our considerations to 
\begin{align*}
\si>\si^*
\end{align*}
and show that the large-time evolution of the first $p$-moment can be deduced from the balance law of a substitute moment. In this way we recover the well-known linear grow relation for $\calP\at{t}$, see for instance \cite{ReiEtAl02}, which reveals that the natural choice for the ballistic time scale is $\tau=1$.
\begin{proposition}[center of mass in the supercritical regime]
%
There exists a constant $C$ such that
\begin{align*}
\tau \babs{\calP\at{t}-\la\, t}\leq C\at{1+\tau+\nu^2t}\qquad \text{with}\qquad \frac{1}{\la} :=\frac{1}{L}\int\limits_{0}^L \frac{\dint{p}}{H^\prime\at{p}-\si}
\end{align*}
holds for any $t$ and all sufficiently small $\nu>0$.
\end{proposition}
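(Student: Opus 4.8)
\medskip
\noindent\emph{Proof strategy.}
The plan is to carry the substitute-moment construction behind Lemma~\ref{Lem:EvolutionK} over to the ballistic case, where it becomes particularly transparent: the effective potential has no wells, so $\varrho$ does not localise and none of the dissipation or Wasserstein machinery is needed. First I would introduce the weight $\phi$ fixed by
\begin{align*}
\phi^\prime\at{p}:=\frac{c}{\si-H^\prime\at{p}}\,,\qquad \phi\at{0}:=0\,,\qquad c:=L\Bat{\,\int\limits_{0}^{L}\frac{\dint{p}}{\si-H^\prime\at{p}}\,}^{-1}\,,
\end{align*}
which is well posed since $\si>\si^*$ and Assumption~\ref{Ass:Pot} give $\si-H^\prime\at{p}\geq\si-\si^*>0$ for all $p$. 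The constant $c$ is chosen so that $\phi^\prime$ has period-average $1$, and by the $L$-periodicity of $H$ it coincides, up to the sign fixed by the orientation of $\calP$, with the constant $\la$ of the statement. Then $\phi$ is smooth with $\phi^\prime,\phi^{\prime\prime}$ bounded and $L$-periodic, and $p\mapsto\phi\at{p}-p$ is bounded on $\Rset$; hence the substitute moment
\begin{align*}
\wt{\calP}\at{t}:=\int\limits_{\Rset^n}\int\limits_{\Rset}\phi\at{p}\,\varrho\triple{t}{x}{p}\dint{p}\dint{x}
\end{align*}
differs from $\calP\at{t}$ by a quantity bounded uniformly in $t$ and $\nu$.

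\medskip
\noindent\emph{Balance law and conclusion.}
Next I would derive the evolution of $\wt{\calP}$. Since $\bat{H^\prime\at{p}-\si}\phi^\prime\at{p}\equiv-c$ by construction, testing \eqref{Eqn:PDE} with $\phi$, integrating by parts in $p$, integrating over $x$ so the $\Delta_x$-term drops out, and using the normalisation \eqref{Eqn:UnitMass} should give
\begin{align*}
\tau\,\tfrac{\dint}{\dint{t}}\wt{\calP}\at{t}=\int\limits_{\Rset^n}\int\limits_{\Rset}\Bat{\nu^{2}\phi^{\prime\prime}\at{p}-\bat{H^\prime\at{p}-\si}\phi^\prime\at{p}}\varrho\triple{t}{x}{p}\dint{p}\dint{x}=c+\nu^{2}\int\limits_{\Rset^n}\int\limits_{\Rset}\phi^{\prime\prime}\at{p}\,\varrho\triple{t}{x}{p}\dint{p}\dint{x}\,.
\end{align*}
Because $\phi^{\prime\prime}$ is bounded and $\varrho\at{t}$ has unit mass, the last term is $\DO{\nu^{2}}$ uniformly in $t$, so $\babs{\tau\tfrac{\dint}{\dint{t}}\wt{\calP}\at{t}-c}\leq C\nu^{2}$. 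Integrating over $\ccinterval{0}{t}$, using $\babs{\phi\at{p}}\leq C\at{1+\babs{p}}$ together with a finite-moment bound on the initial data as in Assumption~\ref{Ass:InitialData} to control $\babs{\wt{\calP}\at{0}}$, and finally invoking $\babs{\wt{\calP}\at{t}-\calP\at{t}}\leq C$, I obtain the asserted estimate $\tau\babs{\calP\at{t}-\la\,t}\leq C\at{1+\tau+\nu^{2}t}$. In particular the natural ballistic scaling $\tau=1$ reproduces the classical linear-in-time growth of the centre of mass at the homogenised speed $\la$, cf.~\cite{ReiEtAl02}.

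\medskip
\noindent\emph{Main difficulty.}
I expect no genuine obstacle here: once the wells are gone there is no metastability and no localisation, and hence no need for the estimates of \S\ref{sect:ErrorEstimates}--\S\ref{sect:limit}; the argument is as elementary as the appendix's title promises. The only points needing a little care are the identification of the averaging constant $c$ with $\la$, which is nothing but the $L$-periodicity of $H$, and the justification of the integration by parts against the linearly growing weight $\phi$: the boundary contributions as $p\to\pm\infty$, and the vanishing of $\int_{\Rset^n}\Delta_x\at{\cdots}\dint{x}$, follow from the rapid decay of the smooth solution $\varrho$ in $p$ and in $x$ by standard parabolic estimates, see \cite{Fri64}. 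One could even sharpen the $\nu^{2}t$ term, since $\phi^{\prime\prime}$ has vanishing integral over a period, but this refinement is not needed for the stated bound.
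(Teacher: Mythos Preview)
Your argument is correct and reaches the stated estimate, but it is organised differently from the paper's proof. The paper does \emph{not} take the leading-order weight directly; instead it defines a $\nu$-dependent function $\psi$ via the full second-order ODE
\[
\nu^{2}\psi^{\prime\prime}\at{p}=\bat{H^\prime\at{p}-\si}\psi^\prime\at{p}+1\,,
\]
solves it by variation of constants in terms of the Gibbs function $\ga$, and chooses the free constant so that $\psi^\prime$ is $L$-periodic. This makes the balance law \emph{exact}, $\tau\tfrac{\dint}{\dint t}\int\psi\varrho=1$, and the $\nu^{2}$-error appears only at the end, when the periodic average of $\psi^\prime$ is computed by the formal expansion $\psi^\prime=u_0+\nu^{2}u_1+\tdots$ with $u_0\at{p}=\at{\si-H^\prime\at{p}}^{-1}$. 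Your weight $\phi$ is precisely the primitive of $c\,u_0$, i.e.\ the leading-order part of the paper's $\psi$; you trade an exact balance law plus a posteriori asymptotics for an approximate balance law with the error already visible. Both routes give the same bound; yours is more elementary (no ODE with small parameter, no Laplace-type expansion), while the paper's keeps closer contact with the $1/\ga$-based construction used in the subcritical sections and, in principle, allows one to read off higher-order corrections to $\la$ from the terms $u_1,u_2,\tdots$. Your closing remark that the $\nu^{2}t$ term could be sharpened because $\phi^{\prime\prime}$ has vanishing period-average is a genuine observation not made in the paper.
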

\begin{proof}
We define a moment weight $\psi$ by the ODE initial value problem
\begin{align}
\label{Prop:Ballistic.PEqn0}
\nu^2\psi^{\prime\prime}\at{p}=\bat{H^\prime\at{p}-\si}\psi^\prime\at{p}+1\,,\qquad \psi^\prime\at{0}=c\,,\qquad \psi\at{0}=0\,,
\end{align}
where $c$ will be chosen below, and using integration by parts we infer from \eqref{Eqn:PDE} the identity
\begin{align}
\label{Prop:Ballistic.PEqn1}
\begin{split}
\tau\frac{\dint{}}{\dint{t}}\int\limits_{\Rset^n}\int\limits_\Rset \psi\at{p}\varrho\triple{t}{x}{p}\dint{p}\dint{x}&=\int\limits_{\Rset^n}\int\limits_\Rset \Bat{\nu^2\psi^{\prime\prime}\at{p}-\bat{H^\prime\at{p}-\si}\psi^\prime\at{p}}\varrho\triple{t}{x}{p}\dint{p}\dint{x}
\\&=\int\limits_{\Rset^n}\int\limits_\Rset \varrho\triple{t}{x}{p}\dint{p}\dint{x}=1\,.
\end{split}
\end{align}
By Variation of Constants we further demonstrate that $\psi$ 
satisfies 
\begin{align*}
\psi^\prime\at{p}=\frac{1}{\ga\at{p}}\at{c\,\ga\at{0}+\int\limits_0^p \frac{\ga\at{q}}{\nu^2}\dint{q}}
\end{align*}
with  $\ga$ as in \eqref{Eqn:Gibbs}, and conclude that there is precisely one choice of $c$, namely %
\begin{align*}
c=\frac{1}{\nu^2\bat{\ga\at{L}-\ga\at{0}}}\int\limits_0^L\ga\at{q}\dint{q}>0\,,
\end{align*}
such that $\psi^\prime$ is $L$-periodic. This implies %
\begin{align*}
\abs{\int\limits_{\Rset^n}\int\limits_\Rset \psi\at{p}\varrho\triple{t}{x}{p}\dint{p}\dint{x}-
\frac{\calP\at{t}}{L}\int\limits_0^L \psi^\prime\at{q}\dint{q}} \leq C
\end{align*} 
since $p\mapsto \psi\at{p} -p \,L^{-1} \int_0^L\psi^\prime\at{q}\dint{q}$ is bounded, and it remains to compute the integral of $\psi^\prime$ over $\ccinterval{0}{L}$. Inserting the ansatz 
\begin{align*}
\psi^\prime =:u= u_0+\nu^2u_1+\nu^4 u_2+\tdots
\end{align*}
into the differential equation \eqref{Prop:Ballistic.PEqn0} we verify
\begin{align*}
u_0\at{p}=\frac{1}{\si-H^\prime\at{p}}\,,\qquad u_1\at{p}=\frac{u_0^\prime\at{p}}{H^\prime\at{p}-\sigma}=-\frac{H^{\prime\prime}\at{p}}{\bat{\si- H^\prime\at{p}}^3}\,,
\end{align*}
and the claim follows after integrating \eqref{Prop:Ballistic.PEqn1} with respect to $t$.
\end{proof}
%
%
\section{Mass exchange in a double-well potential}
\label{sect:doublewell}
%
%
In this appendix we apply the asymptotic arguments from above to the case of a double-well potential as illustrated and described in Figure \ref{Fig:PlotDoubleWell}. For simplicity we restrict our considerations to the spatially homogeneous situation and study the Fokker-Planck equation
\begin{align}
\label{AppB:FP}
\tau\partial_t\varrho\pair{t}{p}=
\partial_p\bat{\nu^2\partial_p\varrho\pair{t}{p}+H^\prime\at{p}\varrho\pair{t}{p}}\,,
\end{align}
where the scaling law
\begin{align*}
\tau:= \om_0\,\om_-\exp\at{-\frac{h-}{\nu^2}}
\end{align*}
involves the curvature constants from Figure \ref{Fig:PlotDoubleWell} and is provided by Kramers formula.
For the latter we refer to \cite{Kra40,Ber13}, and to \cite{BEGK04,MS14} for generalization  to higher dimensions.  
\begin{figure}[ht!] %
\centering{ %
\includegraphics[width=0.45\textwidth]{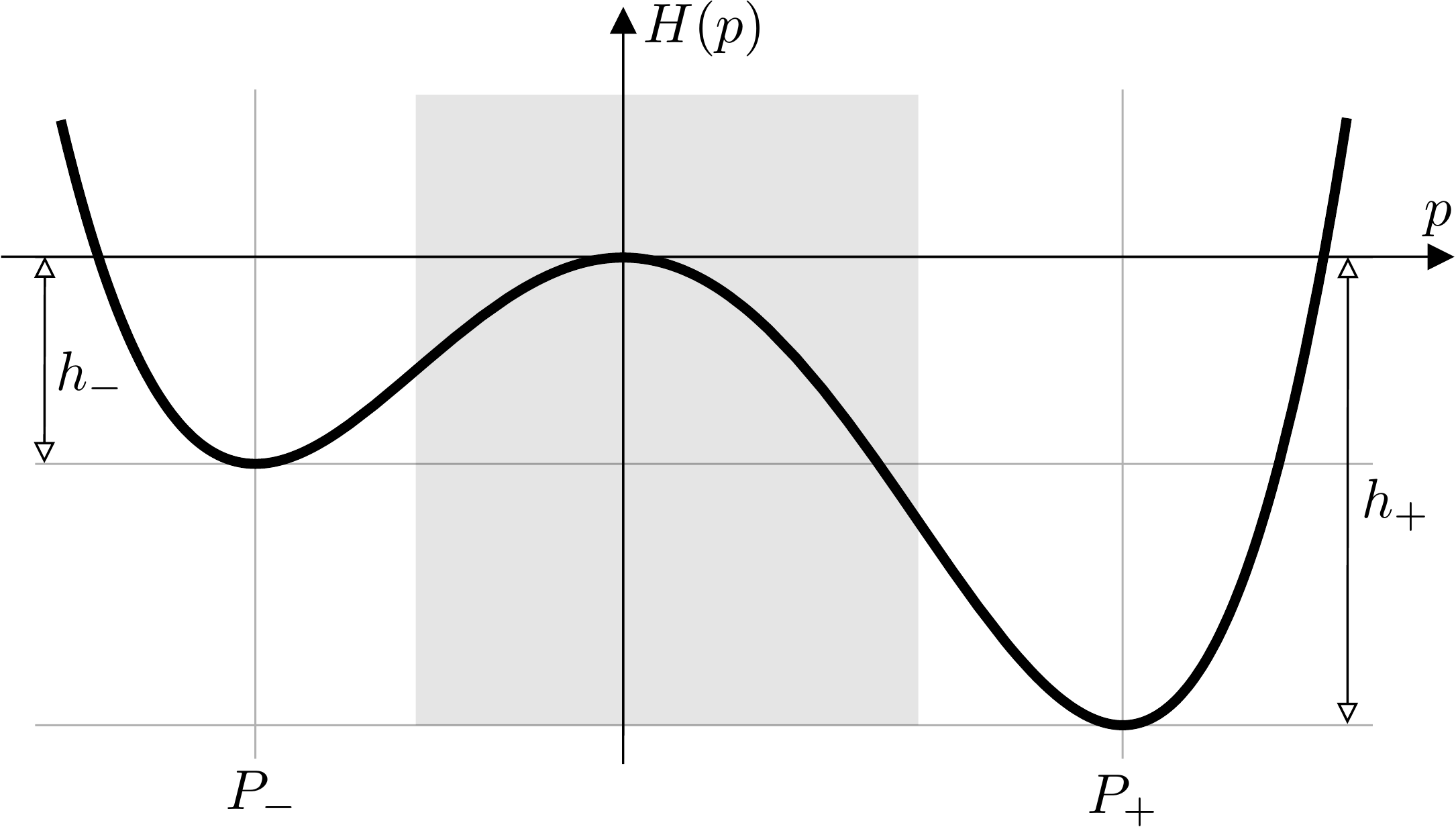}
} %
\caption{In this appendix, $H$ is a smooth double-well potential which grows at least quadratically at infinity, admits the normalized local maximum $H\at{0}=0$, and attains two local minima at $P_\pm$ with $P_-<0<P_+$. Moreover, we always assume $h_-\leq h_+$, where $h_{\pm}:=-H\at{P_\pm}>0$, and suppose that $H$ is non-degenerated according to $0>H^{\prime\prime}\at{0}=:-2\pi\om_0^2$ and
$0< H^{\prime\prime}\at{P_\pm}=:2\pi \om_\pm^2$.
} %
\label{Fig:PlotDoubleWell}
\end{figure}
%
%
\subsection{Limit dynamics and known proof strategies} %
%
As in the tilted case one expects that almost all mass of the system is -- at least for regular initial data and after a small transient time -- concentrated near the stable wells, i.e. in the vicinity of the local minima at $p=\pm P$. It is therefore natural to introduce the intervals (or `phases')
\begin{align*}
J_-:=\oointerval{-\infty}{0}\,,\qquad J_+:=\oointerval{0}{+\infty}
\end{align*}
and to define the partial masses by
\begin{align*}
m_\pm\at{t}:=\int\limits_{J_{\pm}}\varrho\pair{t}{p}\dint{p}\qquad\text{so that}\qquad  m_-\at{t}+m_+\at{t}=1\,.
\end{align*}
Using formal asymptotic analysis one easily shows -- see, e.g. \cite{Kra40,HNV12} for more details -- that the two phases exchange mass according to
\begin{align}
\label{AppB:Lim}
\dot{m}_+\at{t}=-\dot{m}_-\at{t}=\left\{\begin{array}{lcl}
m_-\at{t}&&\text{in the generic case with $h_->h_+$\,,}\\
m_-\at{t}-m_+\at{t}&&\text{in the symmetric case of $H\at{p}=H\at{-p}$\,,}\\
\end{array}\right.
\end{align}
and there exists several ways to derive the limit ODEs rigorously. The first one is to apply large deviation results to the underlying stochastic ODE, see for instance \cite{Ber13}, but alternative, PDE-analytic proofs have been given during the last decades by several authors in the framework of gradient flows. However, those proofs have so far been restricted to the symmetric situation with even function $H$ and require non-obvious modification in the general, asymmetric case.
\par
A first key observation -- both in the symmetric and the asymmetric case -- is that the Gibbs function 
\begin{align*}
\ga\at{p}:=\exp\at{-\nu^{-2}H\at{p}}
\end{align*}
is now integrable, so that \eqref{AppB:FP} admit the global equilibrium
\begin{align}
\label{AppB:EQ}
\ol{\ga}\at{p}:=\frac{\ga\at{p}}{\mu_-+\mu_+}\,,
\end{align}
where the constant $\mu_\pm$ will be computed below and ensure that $\int_\Rset\ol{\ga}\at{p}\dint{p}=1$. In terms of the relative density
\begin{align*}
u\pair{t}{p}:=\frac{\varrho\pair{t}{p}}{\ol{\ga}\at{p}}\,,
\end{align*}
the PDE \eqref{AppB:FP} reads
\begin{align}
\label{AppB:FP2}
\tau \ol{\ga}\at{p}\partial_t u\pair{t}{p}=\nu^2\partial_p \Bat{\ol{\ga}\at{p}\partial_p  u\pair{t}{p}}
\end{align}
and can be interpreted as scaled variant of the $\fspaceH^{0}_{\ol{\ga}}$-gradient flow to the $\fspaceH^1_{\ol{\ga}}$-energy of $u$, where the lower index indicates that the Sobolev spaces involve the weight function $\ol{\ga}$.  This Hilbert space formulation has -- in a slightly different setting -- been exploited in \cite{PSV10} for the rigorous derivation of the limit model in the symmetric case with even potential $H$. In particular, it has been shown that the quadratic metric tensor as well as the quadratic energy for $u$ --- which both depend on $\nu$ via $\ol\ga$ --- $\Gamma$-converge to limit objects that provide a linear gradient structure for the partial masses $\pair{m_-}{m_+}$. Finally, \cite{ET16} also passes to the limit $\nu\to0$ in \eqref{AppB:FP2} but exploits more elementary concepts instead of $\Gamma$-convergence.
\par
As already mentioned and shown in \cite{JKO97,JKO98}, the Fokker-Planck equation \eqref{AppB:FP} can also be regarded as the Wasserstein gradient flow to the energy
\begin{align*}
\calE\at{t}:=\int\limits_{\Rset}\at{\nu^2\varrho\pair{t}{p}\ln\bat{\varrho\pair{t}{p}}+H\at{p}\varrho\pair{t}{p}}\dint{p}\,,
\end{align*}
in the space of all probability measure on $\Rset$, and it is reasonable to ask whether one can also pass to the limit $\nu\to0$ in this non-flat setting with state-dependent metric tensor; cf. \cite{AGS08,Vil09} for the general theory of such gradient flows. A positive answer -- again in a slightly different setting --  has been given in \cite{HN11} and \cite{AMPSV12} using different concepts of evolutionary $\Gamma$-convergence, see especially \cite{AMPSV12} for a comparative discussion. However, both results are again restricted to the spatial case $h_-=h_+$ because otherwise $u$ cannot expected to be bounded independently of $\nu$. 
\par
In what follows we sketch an alternative derivation of  the limit models \eqref{AppB:Lim} which combines the dynamics of substitute masses with the a priori bounds for the Wasserstein dissipation, does not appeal to any notion of $\Gamma$-convergence, and covers both symmetric and asymmetric functions $H$.
%
%
\subsection{Substitute masses and passage to the limit}
%
%
In consistency with the case of a tilted periodic potential we define the scalar quantities
\begin{align*}
\mu_\pm:=\int\limits_{J_\pm}\ga\at{p}\dint{p}\,,\qquad  \eta:=\int\limits_{P_-}^{P_+}\frac{1}{\ga\at{p}}\dint{p}\,,\qquad \ka:=\frac{\mu_-}{\mu_+}\,,\qquad
\theta := \frac{\tau\mu_-\eta}{\nu^2}-1
\end{align*}
and introduce relative densities $\om^2_\pm:J_\pm\to\Rset$ by
\begin{align*}
w_\pm^2\pair{t}{p} : = \frac{\varrho\pair{t}{p}}{\ga_\pm\at{p}} \qquad \text{for}\quad p\in J_\pm \,,
\end{align*}
where 
\begin{align*}
\ga_\pm\at{p}:=\mu_\pm^{-1}\ga\at{p}\chi_{J_\pm}\at{p}
\end{align*}
represent normalized restrictions of $\ga$ to $J_\pm$. Moreover, choosing the moment weight $\psi$ according to
\begin{align}
\label{AppB:Mom}
\psi\at{P_-}=0\,,\qquad \psi^\prime\at{p}=\left\{\begin{array}{lcl}\D\frac{1}{\eta\ga\at{p}}&&\text{for $p\in\oointerval{P_-}{P_+}$\,,}\\0&&\text{else}\,,\end{array}\right.
\end{align} 
the substitute masses are given by
\begin{align*}
\widetilde{m}_-\at{t}:=\int\limits_{\Rset}\bat{1-\psi\at{p}}\varrho\pair{t}{p}\dint{p}\,,\qquad
\widetilde{m}_+\at{t}:=\int\limits_{\Rset}\psi\at{p}\varrho\pair{t}{p}\dint{p}
\end{align*}
and
\begin{align*}
\ol{m}_{\pm}\at{t}:=w_\pm\pair{t}{P_\pm}^2\,.
\end{align*}
We finally observe that
\begin{align*}
\varrho\pair{t}{p}=w_-\pair{t}{p}^2\ga_-\at{p}+w_+\pair{t}{p}^2\ga_-\at{p}
\end{align*}
as well as
\begin{align*}
m_\pm\at{t}=
\int\limits_{J_\pm} w_\pm^2\pair{t}{p}\ga_\pm\at{p}\dint{p},\,\qquad \qquad
m_-\at{t}+m_+\at{t}=
\widetilde{m}_-\at{t}+\widetilde{m}_+\at{t}=1
\end{align*}
hold by construction, and that the Wasserstein dissipation can be written as
\begin{align*}
\calD\at{t}=\int\limits_\Rset
\frac{\Bat{\partial_p\varrho\pair{t}{p}+ \nu^{-2}H^\prime\at{p}\varrho\pair{t}{p}}^2}{\varrho\pair{t}{p}}\dint{p}=4 D_-\at{t}+4D_+\at{t}
\end{align*}
with
\begin{align*}
 D_\pm\at{t}:=
\int\limits_{J_\pm}\bat{\partial_p w_\pm\pair{t}{p}}^2\ga_\pm\at{p}\dint{p}\,.
\end{align*}
Thanks to these definitions and employing the techniques from \S\ref{sect:AA} we establish the following results on the effective dynamics for $\nu\to0$.
\begin{proposition}[building blocks for the limit $\nu\to0$]
\label{AppB:Prop}
The following statements are satisfied for all sufficiently small $\nu>0$:
\begin{enumerate}
\item 
\ul{\emph{Elementary asymptotics:}} The scalar quantities fulfil 
\begin{align*}
\babs{\theta}+\abs{\eta\frac{\om_0}{\nu}-1}+\abs{\mu_\pm
\frac{\om_\pm}{\nu}\exp\at{-\frac{h_\pm}{\nu^2}}-1}
+\abs{\kappa
\frac{\om_-}{\om_+}\exp\at{\frac{h_+-h_-}{\nu^2}}-1}\leq C\nu^2\,.
\end{align*}
\item
\ul{\emph{Effective dynamics:}} The substitute masses evolve according to
 \begin{align}
\label{AppB:Dyn}
\pm\at{1+\theta}\frac{\dint}{\dint{t}}\widetilde{m}_\pm\at{t}=\ol{m}_-\at{t}-\kappa\ol{m}_+\at{t}\,.
\end{align}
\item \ul{\emph{Dissipation bounds error:}} We have
\begin{align*}
\babs{\widetilde{m}_\pm\at{t}-m_{\pm}\at{t}}+\babs{\ol{m}_\pm\at{t}-m_{\pm}\at{t}}\leq C\abs{\tau^{-1/2}\nu^2\calD\at{t}+\tau^{1/2}\nu^{-2}}\,.
\end{align*}
\item \ul{\emph{Energy balance:}}
The total Wasserstein dissipation is bounded by 
\begin{align*}
\int\limits_0^\infty \calD\at{t}\dint{t}\leq C\tau\nu^{-4}\bat{C+\calE\at{0}}\,.
\end{align*}
\end{enumerate}
Here, the constant $C$ depends on $H$ but not on $\nu$.
\end{proposition}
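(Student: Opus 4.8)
The four statements will be obtained by transcribing, with minor modifications, the arguments of Section~\ref{sect:AA}; the only structural novelty is that the two reservoirs $J_\pm$ are now half-lines, so that the superquadratic growth of $H$ at infinity has to be used to make Laplace's method applicable and to discard the contributions from $\abs{p}\to\infty$. For part~(1) I would apply Laplace's method exactly as in the proof of Lemma~\ref{Lem:AsympIntegrals}: the integrand of $\mu_\pm$ is maximal at $P_\pm$ with $H\at{P_\pm}=-h_\pm$ and $H^{\prime\prime}\at{P_\pm}=2\pi\om_\pm^2$, which gives $\mu_\pm=\at{\nu/\om_\pm}\exp\at{h_\pm/\nu^2}\bat{1\pm\DO{\nu^2}}$, while the integrand of $\eta$ peaks at $p=0$ with $H\at{0}=0$ and $H^{\prime\prime}\at{0}=-2\pi\om_0^2$, so that $\eta=\at{\nu/\om_0}\bat{1\pm\DO{\nu^2}}$; dividing these expansions and inserting the scaling law for $\tau$ produces the asserted bounds for $\mu_\pm$, $\eta$, $\ka=\mu_-/\mu_+$ and $\theta=\tau\mu_-\eta/\nu^2-1$. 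For part~(2) I would follow the proof of Proposition~\ref{Prop:SubstituteMasses}: by construction $\psi$ is continuous, piecewise smooth and satisfies the singular ODE $\nu^2\psi^{\prime\prime}-H^\prime\psi^\prime=\tfrac{\nu^2}{\eta\ga\at{P_-}}\delta_{P_-}-\tfrac{\nu^2}{\eta\ga\at{P_+}}\delta_{P_+}$ on $\Rset$; testing \eqref{AppB:FP} with $\psi$ and integrating by parts twice in $p$ gives $\tau\,\partial_t\widetilde m_+=\tfrac{\nu^2}{\eta\ga\at{P_-}}\varrho\pair{t}{P_-}-\tfrac{\nu^2}{\eta\ga\at{P_+}}\varrho\pair{t}{P_+}$, and rewriting $\varrho\pair{t}{P_\pm}=\mu_\pm^{-1}\ga\at{P_\pm}\ol m_\pm$ together with $\mu_-/\mu_+=\ka$ and $\tau\mu_-\eta/\nu^2=1+\theta$ yields $\at{1+\theta}\partial_t\widetilde m_+=\ol m_--\ka\ol m_+$; the mass identity $\widetilde m_-+\widetilde m_+=1$ then gives the companion equation with the opposite sign, i.e.\ \eqref{AppB:Dyn}.

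For part~(3) I would first establish the double-well analogue of Lemma~\ref{Lem:AuxResult}. For each sign I would pick two points inside $J_\pm$ on either side of $P_\pm$ --- one at the intermediate level $\tfrac12 H\at{P_\pm}$ between $P_\pm$ and the separating maximum at $p=0$, the other at a fixed order-one positive level on the outward side --- and let $I_\pm$ be the open interval between them; a sharp Laplace estimate of the boundary-layer tails, in which the extra factor $\nu^2$ arises from integrating $\exp\at{-H^\prime\at{R}\at{p-R}/\nu^2}$ across an order-one interval, then gives $\int_{J_\pm\setminus I_\pm}\ga_\pm\dint{p}\leq C\nu\sqrt\tau$ as well as $\sup_{I_+}\abs{1-\psi}+\sup_{I_-}\abs{\psi}\leq C\nu\sqrt\tau$, where the hypothesis $h_-\leq h_+$ is precisely what makes $\exp\at{-\tfrac{h_\pm}{2\nu^2}}$ comparable to $\sqrt\tau\sim\exp\at{-\tfrac{h_-}{2\nu^2}}$. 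With these estimates in hand, the H\"older/Cauchy--Schwarz chain of \eqref{Prop:ErrorBounds.PEqn1}--\eqref{Prop:ErrorBounds.PEqn2} gives $\abs{m_\pm-\ol m_\pm}\leq C\sqrt{m_\pm D_\pm}$, and the two-term analogue of the exact representation \eqref{Prop:ErrorBounds.PEqn5}, namely $m_+-\widetilde m_+=\int_{J_+}\bat{1-\psi}w_+^2\ga_+\dint{p}-\int_{J_-}\psi\,w_-^2\ga_-\dint{p}$, combined with the auxiliary estimates, gives $\abs{m_\pm-\widetilde m_\pm}\leq C\bat{\sqrt{m_-D_-}+\sqrt{m_+D_+}}+C\nu\sqrt\tau$; finally Young's inequality $\sqrt{m_\pm D_\pm}\leq\tfrac12\tau^{1/2}\nu^{-2}m_\pm+\tfrac12\tau^{-1/2}\nu^2 D_\pm$ together with $m_-+m_+=1$ and $D_-+D_+=\tfrac14\calD$ yields the claimed bound.

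For part~(4) the double-well case is in fact simpler than its counterpart Lemma~\ref{Lem:Dissipation}, because the energy is now bounded below and no control of the center of mass is required. The energy--dissipation relation of the Wasserstein gradient structure reads, in the present spatially homogeneous situation, $\tau\dot\calE\at{t}=-\nu^4\calD\at{t}$, so integration gives $\nu^4\int_0^T\calD\at{t}\dint{t}=\tau\bat{\calE\at{0}-\calE\at{T}}$; since $\int_\Rset\varrho\ln\at{\varrho/\ol\ga}\dint{p}\geq0$ one has $\calE\at{t}\geq-\nu^2\ln\at{\mu_-+\mu_+}\geq-C$ by part~(1), and letting $T\to\infty$ gives $\int_0^\infty\calD\at{t}\dint{t}\leq C\tau\nu^{-4}\bat{C+\calE\at{0}}$.

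The only genuinely delicate point is the sharp Laplace asymptotics behind part~(3): one must control the tails of $\ga_\pm$ and the deviation of $\psi$ from its limiting values $0$ and $1$ on the boundary layers near $P_\pm$, and it is exactly the extra $\nu^2$ gained from the outgoing exponential tail that upgrades a naive $\nu^{-1}\sqrt\tau$ bound to the required $\nu\sqrt\tau$; one also has to invoke the superquadratic growth of $H$ to bound the far tails entering $\mu_\pm$. Parts~(1), (2) and (4) are then routine adaptations of the corresponding arguments in Section~\ref{sect:AA}.
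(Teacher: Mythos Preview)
Your proposal is correct and follows essentially the same approach as the paper, which itself merely states that parts~(1)--(3) are derived analogously to Lemma~\ref{Lem:AsympIntegrals}, Proposition~\ref{Prop:SubstituteMasses}, and Proposition~\ref{Prop:ErrorBounds}, and that part~(4) is simpler because the existence of the global equilibrium $\ol\ga$ gives the lower bound $\calE\at{t}\geq-\nu^2\ln\at{\mu_-+\mu_+}\geq-C$. Your elaboration of the half-line adaptations (superquadratic growth of $H$ for the outward tails, the halfway-level choice on the inward side) is exactly what is needed; the only minor comment is that the ``sharp'' boundary-Laplace factor $\nu^2$ you emphasize in part~(3) is not actually essential for the final bound, since even the naive estimate $C\nu^{-1}\sqrt{\tau}$ is already dominated by $C\tau^{1/2}\nu^{-2}$.
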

\begin{proof}
The first three assertions can be derived  analogously to the proofs of Lemma \ref{Lem:AsympIntegrals}, Proposition \ref{Prop:SubstituteMasses}, and Proposition \ref{Prop:ErrorBounds}. The justification of the fourth claim is even simpler than in \S\ref{sect:AA} because the energy $\calE$ is now globally bounded from below due to the existence of the global minimizer $\ol\ga$ from \eqref{AppB:EQ}. In particular, we have
\begin{align*}
\calE\at{t}\geq \int\limits_\Rset\ol{\ga}\at{p}\Bat{\nu^2\ln\bat{\ol{\ga}\at{p}}+H\at{p}}\dint{p}\geq -\nu^2\ln\at{\mu_-+\mu_+}\geq -C
\end{align*}
thanks to $\mu_\pm\sim \exp\at{-h_\pm/\nu^2}$.
\end{proof}
Proposition \ref{AppB:Prop} allow us to pass to the limit $\nu\to0$ similarly to Theorem \ref{Thm:Convergence}, i.e., by means of functions $\breve{m}_\pm$ which solve the limit ODEs and attain the same initial values as $\widetilde{m}_\pm$. The outcome can informally stated as follows.
\begin{corollary}[limit dynamics for $\nu\to0$]
For sufficiently nice initial data, the asymptotic mass exchange is governed by
\begin{align*}
\pm \dot{m}_\pm\at{t}=m_-\at{t}
\end{align*}
for $h_->h_+$ and by 
\begin{align*}
\pm \dot{m}_\pm\at{t}=m_-\at{t}-\kappa m_+\at{t}
\end{align*}
in the non-generic case of $h_-=h_+$,  where $\kappa=\om_+/\om_-$.
\end{corollary}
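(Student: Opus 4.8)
The plan is to mirror the structure of the proof of Theorem~\ref{Thm:Convergence}, but in the simpler spatially homogeneous setting of \eqref{AppB:FP}, using the four building blocks provided by Proposition~\ref{AppB:Prop}. First I would combine the effective dynamics \eqref{AppB:Dyn} with the error estimate (item~3 of Proposition~\ref{AppB:Prop}) to rewrite the equation for $\widetilde m_\pm$ as a perturbation of the claimed limit ODE. Concretely, from \eqref{AppB:Dyn} and $m_-+m_+\equiv1$ one gets, after replacing $\ol m_\pm$ by $m_\pm$ up to the controllable error, a relation of the form $\pm(1+\theta)\tfrac{\dint}{\dint t}\widetilde m_\pm = m_- - \kappa m_+ + (\text{error terms})$, where the error terms are linear combinations of the differences $\widetilde m_\pm - m_\pm$ and $\ol m_\pm - m_\pm$ together with the $\kappa$- and $\theta$-corrections coming from the mismatch between $m_- - \kappa m_+$ and the target right-hand side ($m_-$ in the generic case, $m_- - \kappa m_+$ with $\kappa = \om_+/\om_-$ in the degenerate case). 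Note that in the degenerate case $h_- = h_+$ the asymptotics in item~1 give $\kappa \to \om_+/\om_- = O(1)$, so the right-hand side $m_- - \kappa m_+$ is already the limit dynamics up to the lower-order corrections; in the generic case $h_- > h_+$ we have $\kappa \to 0$ exponentially, so $\kappa m_+$ is itself an error term.

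Second, I would control the time-integrated size of all error terms. The crucial input is item~4 of Proposition~\ref{AppB:Prop}, which — because $\calE$ is now bounded below by a $\nu$-independent constant (the proof uses the global equilibrium $\ol\ga$ from \eqref{AppB:EQ}) — yields $\int_0^\infty \calD\at{t}\dint t \leq C\tau\nu^{-4}$. Feeding this into item~3 and using $m_-+m_+\equiv1$ gives $\int_0^\infty\bigl(\babs{\widetilde m_\pm - m_\pm} + \babs{\ol m_\pm - m_\pm}\bigr)\dint t \leq C\tau^{1/2}\nu^{-2}$, which is exponentially small in $\nu$ by Kramers' law. Together with $\abs\theta = O(\nu^2)$ and the exponential smallness of $\kappa$ in the generic case, the total error accumulated over $[0,T]$ is $O(\tau^{1/2}\nu^{-2} + \nu^2)$ in the generic case and $O(\tau^{1/2}\nu^{-2} + \nu^2)$ in the degenerate case as well (there $\kappa$ is not small but appears in its exact form in the target equation, not as an error).

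Third, I would invoke the stability of the limit dynamics: the linear ODE systems on the right-hand side of the corollary generate contraction (or at least non-expansive) semigroups on $\ell^1$ — indeed they are conservative Markov generators on the two-point state space $\{-,+\}$ — so Duhamel's principle applied to $\breve m_\pm - \widetilde m_\pm$ bounds $\abs{\breve m_\pm\at{t} - \widetilde m_\pm\at{t}}$ by the time-integral of the error, hence by $C(1+T)(\tau^{1/2}\nu^{-2}+\nu^2)$. Combining with the bound on $\abs{\widetilde m_\pm - m_\pm}$ gives $\int_0^T\babs{\breve m_\pm\at t - m_\pm\at t}\dint t \leq C\nu^2(1+T^2)$, i.e.\ convergence to the stated limit ODEs, and letting $\nu\to0$ with the initial data matched to $\widetilde m_\pm\at0$ yields the informal statement in the corollary. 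I expect the main (though still routine) obstacle to be the bookkeeping in the degenerate case: one must verify that the $\nu$-dependent constant $\kappa$ in \eqref{AppB:Dyn} genuinely converges to $\om_+/\om_-$ (this is exactly item~1 of Proposition~\ref{AppB:Prop} with $h_+ = h_-$) and that no spurious asymmetry survives — the symmetric case $H\at p = H\at{-p}$ of \eqref{AppB:Lim} then follows since $\om_+ = \om_-$ forces $\kappa\to1$, recovering $\dot m_+ = m_- - m_+$.
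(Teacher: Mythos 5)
Your proposal is correct and takes essentially the same route as the paper, which itself only sketches the corollary by invoking the four building blocks of Proposition~\ref{AppB:Prop} and repeating the substitute-mass/Duhamel argument of Theorem~\ref{Thm:Convergence}, exactly as you do. Only two cosmetic points: the time-integrated error bound from item~3 should be stated on $\ccinterval{0}{T}$ rather than $\cointerval{0}{\infty}$ (the constant term $\tau^{1/2}\nu^{-2}$ is not integrable in time), and under the paper's standing convention $h_-\leq h_+$ the generic case is $h_-<h_+$ (so that $\kappa\to0$, as you correctly use), the inequality in the corollary's statement being a slip of the paper.
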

We finally emphasize that the primitive of $1/\ga$, which defines the moment weight $\psi$ in \eqref{AppB:Mom}, features prominently also in \cite{PSV10,HN11,AMPSV12,HNV14,ET16} but it seems that this function has never been used before to establish a dynamical identity like \eqref{AppB:Dyn}.
%

%


\newcommand{\etalchar}[1]{$^{#1}$}

\end{document}